\patchcmd{\thebibliography}{\section*{\refname}}{}{}{}
\newtheorem{Theorem}{Theorem}[section]
\newtheorem{Proposition}[Theorem]{Proposition}
\newtheorem{Corollary}[Theorem]{Corollary}
\theoremstyle{remark}
\newtheorem{Remark}[Theorem]{Remark}
\newtheorem{Definition}[Theorem]{Definition}
\def\cch{\mathcal H}
\def\ccsc{\mathcal {SC}}
\def\ccdd{\mathcal {DD}}
\def\ccp{\mathcal P}
\DeclareMathOperator{\BGP}{BG}
\def\bbbz{\mathbb Z}
\numberwithin{equation}{section}
\begin{document}

\title[Congruences]{Congruences for hook lengths of partitions}

\author{Fr\'{e}d\'{e}ric Jouhet}\address{Fr\'{e}d\'{e}ric Jouhet: Institut Camille Jordan, Universit\'e Claude Bernard Lyon 1,
69622 Villeurbanne Cedex, France}
\email{jouhet@math.univ-lyon1.fr}

\author{David Wahiche}\address{David Wahiche: Univ. de Tours, UMR CNRS 7013, Institut Denis Poisson, France}
 \email{wahiche@univ-tours.fr}

\keywords{Integers partitions, Littlewood decomposition, core partitions, congruences}
\subjclass[2020]{05A15, 05A17, 05A19, 05E05, 05E10, 11P81,11P83}

\maketitle
\begin{abstract}
Recently, Amdeberhan \emph{et al.} proved congruences for the number of hooks of fixed even length among the set of self-conjugate partitions of an integer $n$, therefore answering positively a conjecture raised by Ballantine \emph{et al.}. In this paper, we show how these congruences can be immediately derived and generalized from an addition theorem for self-conjugate partitions proved by the second author. We also recall how the addition theorem proved before by Han and Ji can be used to derive similar congruences for the whole set of partitions, which are originally due to Bessenrodt, and Bacher and Manivel. Finally, we extend  such congruences to the set of $z$-asymmetric partitions defined by Ayyer and Kumari, by proving an addition-multiplication theorem for these partitions. Among other things, this contains as special cases the congruences for the number of hook lengths for the self-conjugate and the so-called doubled distinct partitions.
\end{abstract}

\section{Introduction and notations}

Integer partitions are fundamental objects which, although their definition is purely combinatorial, appear in many other fields of mathematics, such as number theory, mathematical physics, and representation theory. In this note, we are interested in an important statistics regarding integer partitions, namely the hook lengths. They are involved for instance in the famous Nekrasov--Okounkov identity, discovered independently by Westbury~\cite{We} in his study of universal characters for $\mathfrak{sl}_n$, and by Nekrasov and Okounkov in their work on random partitions and Seiberg--Witten theory~\cite{NO}:
\begin{equation}\label{eq:no}
\sum_{\lambda\in\ccp}q^{|\lambda|}\prod_{h\in\mathcal{H}(\lambda)}\left(1-\frac{u}{h^2}\right)=\prod_{n\geq1}(1-q^n)^{u-1},
\end{equation}
where $u$ is any complex number and the sum is over all integer partitions $\lambda$, while $\mathcal{H}(\lambda)$ is the hook lengths multi-set of $\lambda$ (see below for precise definitions). Formula~\eqref{eq:no} was later proved and generalized in many ways (see for instance~\cite{W1} and the references cited there).

To be more precise regarding our purposes, recall that a partition $\lambda$ of a positive integer $n$ is a non-increasing sequence of positive integers $\lambda=(\lambda_1,\lambda_2,\dots,\lambda_\ell)$ such that $\lvert \lambda \rvert := \lambda_1+\lambda_2+\dots+\lambda_\ell = n$. The integers $\lambda_i$ are called the parts of $\lambda$, the number of parts $\ell$ being the length of $\lambda$, denoted by $\ell(\lambda)$. We will denote by $\ccp$ and $\ccp(n)$ the set of partitions, and its subset of partitions of $n$, respectively, and we will also use the same convention for any subset of $\ccp$.

Each partition can be represented by its Ferrers diagram, which consists in a finite collection of boxes arranged in left-justified rows, with the row lengths in non-increasing order. The Durfee square of $\lambda$ is the maximal square fitting in the Ferrers diagram. Its diagonal will be called the main diagonal of $\lambda$. Its size will be denoted $d=d(\lambda):=\max(s | \lambda_s\geq s)$. As an example, in Figure~\ref{fig:ferrers}, the Durfee square of $\lambda=(4,3,3,2)$, which is a partition of $12$ of length $4$, is coloured in red and satisfies $d(\lambda)=3$.

\begin{figure}[h]
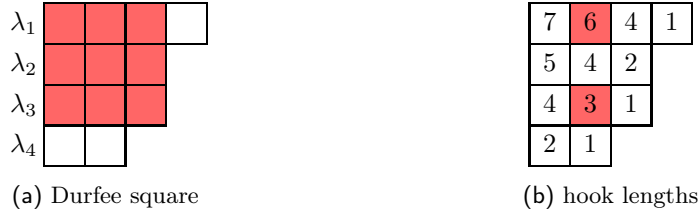

\centering
\begin{subfigure}[t]{.4\textwidth}
\centering
%
\begin{ytableau}
  \none[\lambda_1]  &  *(red!60)  &*(red!60)  & *(red!60) & \\
       \none[\lambda_2] & *(red!60) &*(red!60) &*(red!60) & \none \\
\none[\lambda_3] & *(red!60) &*(red!60)  &*(red!60) &\none \\
\none[\lambda_4] &  &  &\none & \none 
\end{ytableau}
\caption{Durfee square}
\label{fig:ferrers}
\end{subfigure}
\begin{subfigure}[t]{.4\textwidth}
\centering
\begin{ytableau}
  7  &*(red!60) 6 & 4 & 1\\
 5 &4 &2 & \none \\
 4 &*(red!60)3  &1 &\none \\
 2 & 1 &\none & \none \\
\end{ytableau}
\caption{hook lengths}
\label{fig:hooks}
\end{subfigure}
\caption{Ferrers diagram and some partition statistics}
\label{fig:fig1}
\end{figure}

Using the Durfee square, another way of describing partitions is the Frobenius notation, which is a two-rowed array representing the cells strictly to the right and below the main diagonal, namely
\begin{equation}\label{Frob}
\lambda=\left(\begin{matrix}
a_1&a_2&\dots&a_d\\
b_1&b_2&\dots&b_d
\end{matrix}\right),
\end{equation}
where $a_1>a_2>\dots >a_d\geq0$, $b_1>b_2>\dots >b_d\geq0$, and the weight is then $|\lambda|=d+\sum_k(a_k+b_k)$. By convention, the empty partition corresponds to $d=0$. In Figure \ref{fig:ferrers}, the two rows are $(3\;1\;0)$ and $(3\;2\;0)$, respectively.

For each box $v$ in the Ferrers diagram of a partition $\lambda$ (for short we will say for each box $v$ in $\lambda$), one defines the arm length (resp. leg length) as the number of boxes in the same row (resp. in the same column) as $v$ strictly to the right of (resp. strictly below) the box $v$. One defines the hook length of $v$, denoted by $h_v(\lambda)$ or $h_v$ for short, the number of boxes $u$ such that either $u=v$, or $u$ lies strictly below (resp. to the right) of $v$ in the same column (resp. row): for instance, the hooks on the main diagonal of $\lambda$ written in Frobenius notation take the form $h_{(i,i)}(\lambda)=a_i+b_i+1$, for all $1\leq i\leq d$. Moreover note that the sum of hook lengths on the main diagonal is equal to $\lvert \lambda \rvert$.
The hook length multi-set of $\lambda$, denoted by $\mathcal{H}(\lambda)$, is the multi-set of all hook lengths of $\lambda$. For any positive integer $t$, the multi-set of all hook lengths that are congruent to $0 \pmod t$ is denoted by $\mathcal{H}_t(\lambda)$. Notice that $\mathcal{H}(\lambda)=\mathcal{H}_1(\lambda)$. A partition $\omega$ is a $t$-core if $\cch_t(\omega)=\emptyset$.  In Figure~\ref{fig:hooks}, the hook lengths of all boxes for the partition $\lambda=(4,3,3,2)$ have been written in their corresponding boxes and the boxes associated with $\mathcal{H}_3(\lambda)$ shaded in red. In this example, we have $\mathcal{H}(\lambda)=\lbrace 2,1,4,3,1,5,4,2,7,6,4,1\rbrace$ and $\mathcal{H}_3(\lambda)=\lbrace 3,6\rbrace$.

 A rim hook (or border strip, or ribbon) is a connected skew shape containing no $2\times2$ square. The length of a rim hook is the number of boxes in it, and its height is one less than its number of rows. By convention, the height of an empty rim hook is zero.

Let $a$ and $q$ be complex numbers such that $\vert q \vert < 1$. Recall that the $q$-Pochhammer symbol is defined as $(a;q)_0=1$ and for any integer $n\geq 1$,
$$
(a;q)_n:= (1-a)(1-aq) \cdots (1-aq^{n-1}),\quad\mbox{and}\quad (a;q)_\infty := \prod_{j\geq 0} (1-aq^j),
$$
and more generally, we will use the compact notation  $(a_1,\dots,a_m;q)_\infty:=(a_1;q)_\infty\dots(a_m;q)_\infty$ for complex numbers $a_1,\dots,a_m$.

A classical bijection in partition theory is the Littlewood decomposition (see for instance \cite[Theorem~2.7.17]{JK}). Roughly speaking, for any positive integer $t$, it transforms $\lambda\in\ccp$ into two components, namely the $t$-core $\omega$ and the $t$-quotient $\underline{\nu}$ (see Section \ref{sec:lit} for precise definitions and properties):
$$
\lambda\in\ccp\mapsto \left(\omega,\underline{\nu}\right)\in\ccp_{(t)}\times \ccp^t.
$$
In~\cite{HJ}, Han and Ji underline some important properties of the Littlewood decomposition, which enable them to prove a multiplication-addition theorem, which specializes to the following addition theorem.

\begin{Theorem}\label{thm:addition} {\em \cite[Theorem 1.3 with $x=1$]{HJ}}
Let $t$ be a positive integer and set $\rho$ a function defined on $\mathbb{N}$. Let $g_t$ be the following formal power series:
$$
g_t(q):=\sum_{\lambda\in\mathcal{P}}q^{|\lambda|}\sum_{h\in\mathcal{H}(\lambda)}\rho(th).
$$
Then we have
\begin{equation}\label{eq:add}
\sum_{\lambda\in\mathcal{P}}q^{|\lambda|}\sum_{h\in\mathcal{H}_t(\lambda)}\rho(h)= tg_t(q^t)\frac{(q^t;q^t)_{\infty}}{(q;q)_{\infty}}.
\end{equation}
\end{Theorem}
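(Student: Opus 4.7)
The plan is to prove the identity via the Littlewood decomposition $\lambda\mapsto(\omega,\underline{\nu})$ (recalled in Section~\ref{sec:lit}), using its two crucial features: it is weight-preserving in the sense that $|\lambda|=|\omega|+t|\underline{\nu}|$, where $|\underline{\nu}|:=|\nu^{(0)}|+\cdots+|\nu^{(t-1)}|$, and it identifies the multi-set $\cch_t(\lambda)$ with $t\cdot\cch(\nu^{(0)})\cup\cdots\cup t\cdot\cch(\nu^{(t-1)})$, where $t\cdot S$ denotes the multi-set whose elements are those of $S$ each multiplied by $t$. Once these two facts are in hand, the theorem reduces to an algebraic manipulation of generating functions.

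First I would apply the decomposition to the left-hand side of~\eqref{eq:add}, factoring out the $t$-core contribution:
$$\sum_{\lambda\in\ccp}q^{|\lambda|}\sum_{h\in\cch_t(\lambda)}\rho(h)=\left(\sum_{\omega\in\ccp_{(t)}}q^{|\omega|}\right)\sum_{\underline{\nu}\in\ccp^t}q^{t|\underline{\nu}|}\sum_{i=0}^{t-1}\sum_{h\in\cch(\nu^{(i)})}\rho(th).$$
The $t$-core generating function is the well-known identity
$$\sum_{\omega\in\ccp_{(t)}}q^{|\omega|}=\frac{(q^t;q^t)_\infty^t}{(q;q)_\infty},$$
which is itself an immediate consequence of the Littlewood decomposition applied to $1/(q;q)_\infty=\sum_\lambda q^{|\lambda|}$.

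Next, by the symmetry of the sum under permuting the $t$ components of $\underline{\nu}$, each index $i\in\{0,\dots,t-1\}$ contributes the same amount. Isolating a single component and evaluating the remaining $t-1$ components via $\sum_\nu q^{t|\nu|}=1/(q^t;q^t)_\infty$, one obtains
$$\sum_{\underline{\nu}\in\ccp^t}q^{t|\underline{\nu}|}\sum_{i=0}^{t-1}\sum_{h\in\cch(\nu^{(i)})}\rho(th)=\frac{t\,g_t(q^t)}{(q^t;q^t)_\infty^{t-1}},$$
where the definition $g_t(q^t)=\sum_\nu q^{t|\nu|}\sum_{h\in\cch(\nu)}\rho(th)$ is used to recognise the distinguished component. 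Multiplying the two pieces yields precisely $tg_t(q^t)(q^t;q^t)_\infty/(q;q)_\infty$, as required.

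The main obstacle is really the hook-length property of the Littlewood decomposition, namely the identification of $\cch_t(\lambda)$ with $t$ times the disjoint union of the hook multi-sets of the components of the $t$-quotient $\underline{\nu}$. This is the non-trivial combinatorial input behind the theorem; I would either cite it from~\cite[Theorem~2.7.17]{JK} or derive it from the abacus model of partitions, in which a rim hook of length $tk$ in $\lambda$ corresponds, via a single bead move on the $t$-abacus, to a hook of length $k$ in exactly one component $\nu^{(i)}$. Once this correspondence is established, the rest of the proof consists of the generating-function bookkeeping sketched above.
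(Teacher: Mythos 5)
Your proof is correct, and it follows the same strategy the paper itself employs: the theorem is only cited from Han--Ji, but the paper's proof of its generalization (Theorem~\ref{thm:addition-multz} in Section~\ref{sec:proofthm}) proceeds exactly as you do, via the properties $(P2)$ and $(P3)$ of the Littlewood decomposition from Proposition~\ref{prop:hanlitt}, the symmetry among the $t$ quotient components, and the $t$-core generating function $(q^t;q^t)_\infty^t/(q;q)_\infty$. The generating-function bookkeeping checks out, and the hook-length property $\cch_t(\lambda)=t\cch(\underline{\nu})$ that you correctly identify as the essential combinatorial input is precisely what the paper takes from \cite{HJ}.
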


Note that in particular setting $\rho(h)=1$ if $h=t$ and $0$ otherwise in the above theorem , and denoting by $n_t(\lambda)$ the number of hooks of length $t$ in the partition $\lambda$, we get by definition that  $g_t(q)=\sum_{\lambda\in\mathcal{P}}q^{|\lambda|}n_1(\lambda)$. Differentiating with respect to $y$ and setting $y=1$ in
\begin{equation}\label{gfn1}
\sum_{\lambda\in\mathcal{P}}q^{|\lambda|}y^{n_1(\lambda)}=\frac{((1-y)q;q)_\infty}{(q;q)_\infty},
\end{equation}
which is for instance proved elementarily  in~\cite{H}, we derive
\begin{equation}\label{ft1}
g_t(q)=\frac{q}{(1-q)(q;q)_\infty}.
\end{equation}
 Therefore~\eqref{eq:add} translates into 
$$\sum_{\lambda\in\mathcal{P}}q^{|\lambda|}n_t(\lambda)= t\,\frac{q^t}{(1-q^t)(q;q)_{\infty}}.
$$
Extracting the coefficient of $q^n$ on both sides, noticing that $a_{t}(n)=\sum_{\lambda\vdash n}n_t(\lambda)$ is the number of hooks of length $t$ among all partitions of $n$, and recalling the generating function for the set of partitions \linebreak $\sum_{\lambda\in\ccp}q^{|\lambda|}=(q;q)_\infty^{-1}$, one derives 
\begin{equation}\label{congP}
a_{t}(n)=t\sum_{j\geq1}|\ccp(n-jt)|\equiv 0\mod t.
\end{equation}

This formula for $a_t(n)$ is equivalent to a combinatorial expression by Bacher and Manivel in~\cite{BM} (which is actually a direct consequence of a result due to Bessenrodt in~\cite{Be}): for all integers $n\geq0$ and $t\geq 2$, the total number of hooks of length $t$ in all partitions of $n$ is $t$ times the total number of occurrences of the part $t$ among all partitions of $n$.

 In~\cite[Theorem 7.5]{HJ}, it is explained how choosing $\rho(h)=h^\beta$ in Theorem~\ref{thm:addition} provides generalisations of the Bacher--Bessenrodt--Manivel formula. However, one can note that our simpler choice for $\rho$ also yields Han--Ji's result, by using $\sum_{h\in\mathcal{H}_t(\lambda)}h^\beta=t^\beta\sum_{k\geq1}k^\beta n_{tk}(\lambda)$. \\

One could think that the kind of congruences appearing above in~\eqref{congP} is exceptional, but it is already known that they also occur when one restricts his attention to an important subset of $\ccp$, namely the self-conjugate partitions. Recall that the conjugate of $\lambda$, denoted $\lambda'$, is defined by its parts $\lambda_i' = \#\{j, \lambda_j \geq i\}$ for $1\leq i \leq \ell(\lambda)$. The Ferrers diagram of $\lambda'$ is thus obtained from the one of $\lambda$ by reflection with respect to the main diagonal. For instance in Figure~\ref{fig:fig1}, the conjugate of $\lambda=(4,3,3,2)$ is $\lambda'=(4,4,3,1)$.

 A partition $\lambda$ is said to be self-conjugate if it satisfies $\lambda=\lambda'$. In Frobenius notation~\eqref{Frob}, the conjugation corresponds to the inversion of the two rows, and self-conjugation is equivalent to take $a_k=b_k$ for all $k$. We denote the set of self-conjugate partitions by $\mathcal{SC}$. 

The already-mentioned Littlewood decomposition, when restricted to $\ccsc$, also has interesting properties and can be summarized as follows (see originally~\cite{Osima} and more recently for instance~\cite{GKS,W}):
$$\begin{array}{rcll}
\lambda\in\ccsc &\mapsto &\left(\omega,\underline{\tilde{\nu}}\right)\in\ccsc_{(t)}\times\ccp^{t/2}\quad&\text{if $t$ even,}\\
\lambda\in\ccsc &\mapsto &\left(\omega,\underline{\tilde{\nu}},\mu\right)\in\ccsc_{(t)}\times\ccp^{(t-1)/2}\times\ccsc\quad&\text{if $t$ odd.}\end{array}$$

Using the above Littlewood decomposition, the second author proved in~\cite{W} analogues of Theorem~\ref{thm:addition} for self-conjugate partitions (they are actually extended to addition-multiplication theorems). As one can guess, the $t$ even case was simpler to handle than the odd case: for the latter one has to restrict his attention to partitions in a subset of $\ccsc$ for which $\mu$ above is empty. It is shown in~\cite[Lemma 6.1]{W} that it corresponds to a set called $\BGP_t$ in~\cite{B} (this notation is for Brunat--Gramain, referring to~\cite{BG}) which, when $t$ is an odd prime number, is algebraically involved in representation theory of the symmetric group over a field of characteristic $t$ (see also~\cite[Lemma $3.4$]{BG}):
\begin{equation}\label{BGt}
\BGP_t:=\lbrace\lambda\in\ccsc \mid \forall i \in\lbrace 1,\dots,d\rbrace, t\nmid h_{(i,i)}(\lambda)\rbrace.
\end{equation}

Here are the addition theorems proved in~\cite{W} for self-conjugate partitions.
\begin{Theorem}\label{thm:additionSC}{\em \cite[Corollary 3.3 with $b=x=1$ and special case of Theorem 6.2]{W}}
Let $t$ be a positive integer, and let $\rho$ and $g_t$ be defined as in Theorem~\ref{thm:addition}. If $t$ is even, then we have
\begin{equation}\label{eq:addSCeven}
\sum_{\lambda\in \ccsc}q^{|\lambda|}\sum_{h\in\mathcal{H}_{t}(\lambda)}\rho(h)=tg_t(q^{2t})(q^{2t};q^{2t})_{\infty}(-q;q^2)_\infty,
\end{equation}
and if $t$ is odd, then we have
\begin{equation}\label{eq:addSCodd}
\sum_{\lambda\in \BGP_t}q^{|\lambda|}\sum_{h\in\mathcal{H}_{t}(\lambda)}\rho(h)=(t-1)g_t(q^{2t})(q^{2t};q^{2t})_{\infty}\frac{(-q;q^2)_\infty}{(-q^t;q^{2t})_\infty}.
\end{equation}
\end{Theorem}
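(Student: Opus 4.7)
The plan is to mimic Han--Ji's proof of Theorem~\ref{thm:addition} but using the refinement of the Littlewood decomposition to $\mathcal{SC}$ given just before the statement. The central combinatorial input, which I will assume as a property of the decomposition, is that for any $t$-quotient $\underline{\nu}=(\nu_0,\dots,\nu_{t-1})$ of $\lambda$ the multi-set $\mathcal{H}_t(\lambda)$ is exactly $t\cdot(\mathcal{H}(\nu_0)\sqcup\cdots\sqcup\mathcal{H}(\nu_{t-1}))$, and that $|\lambda|=|\omega|+t\sum_i|\nu_i|$. For $\lambda\in\mathcal{SC}$ the conjugation symmetry pairs up the components so that in the even case $\mathcal{H}_t(\lambda)=t\cdot\bigsqcup_{i=1}^{t/2}(\mathcal{H}(\tilde\nu_i)\sqcup\mathcal{H}(\tilde\nu_i))$ with $|\lambda|=|\omega|+2t\sum_{i=1}^{t/2}|\tilde\nu_i|$, and in the odd case one additionally has a self-conjugate middle component $\mu$ contributing a single copy of $\mathcal{H}(\mu)$ and the extra weight $t|\mu|$.

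For $t$ even, applying these facts to the LHS of~\eqref{eq:addSCeven} and using that the bijection is weight-preserving yields
\begin{equation*}
\sum_{\lambda\in\ccsc}q^{|\lambda|}\sum_{h\in\mathcal{H}_t(\lambda)}\rho(h)
=\left(\sum_{\omega\in\ccsc_{(t)}}q^{|\omega|}\right)
\sum_{(\tilde\nu_1,\dots,\tilde\nu_{t/2})\in\ccp^{t/2}}q^{2t\sum_i|\tilde\nu_i|}\cdot 2\sum_{i=1}^{t/2}\sum_{h\in\mathcal{H}(\tilde\nu_i)}\rho(th).
\end{equation*}
By symmetry in the $t/2$ indices, the inner double sum factors as $t\cdot g_t(q^{2t})\cdot(q^{2t};q^{2t})_\infty^{-(t/2-1)}$, where I use $g_t(q^{2t})=\sum_{\nu\in\ccp}q^{2t|\nu|}\sum_{h\in\mathcal{H}(\nu)}\rho(th)$ and the standard generating function $(q^{2t};q^{2t})_\infty^{-1}$ for each of the remaining $t/2-1$ partitions. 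The generating function of $\ccsc_{(t)}$ is then obtained by applying the same Littlewood decomposition with $\rho\equiv 0$ restricted only to tracking weights: $(-q;q^2)_\infty=\sum_{\lambda\in\ccsc}q^{|\lambda|}=\bigl(\sum_{\omega\in\ccsc_{(t)}}q^{|\omega|}\bigr)(q^{2t};q^{2t})_\infty^{-t/2}$, so that $\sum_{\omega\in\ccsc_{(t)}}q^{|\omega|}=(-q;q^2)_\infty(q^{2t};q^{2t})_\infty^{t/2}$. Multiplying these pieces together gives exactly~\eqref{eq:addSCeven}.

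For $t$ odd, the analogous decomposition of $\lambda\in\ccsc$ involves an extra self-conjugate factor $\mu$, and the Lemma~6.1 of~\cite{W} recalled in the text identifies the condition $\mu=\emptyset$ with $\lambda\in\BGP_t$. Since a $t$-core has no hooks of length divisible by $t$ one has $\ccsc_{(t)}\subset\BGP_t$, so restricting the bijection to $\BGP_t$ gives $\ccsc_{(t)}\times\ccp^{(t-1)/2}$ with the hook multi-set $\mathcal{H}_t(\lambda)=t\cdot\bigsqcup_{i=1}^{(t-1)/2}(\mathcal{H}(\tilde\nu_i)\sqcup\mathcal{H}(\tilde\nu_i))$. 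The same factoring argument now produces the prefactor $2\cdot((t-1)/2)=t-1$ and $(q^{2t};q^{2t})_\infty^{-((t-1)/2-1)}$. To compute $\sum_{\omega\in\ccsc_{(t)}}q^{|\omega|}$ I apply the full unrestricted Littlewood decomposition (keeping $\mu$): $(-q;q^2)_\infty=\bigl(\sum_{\omega\in\ccsc_{(t)}}q^{|\omega|}\bigr)(q^{2t};q^{2t})_\infty^{-(t-1)/2}(-q^t;q^{2t})_\infty$, using $\sum_{\mu\in\ccsc}q^{t|\mu|}=(-q^t;q^{2t})_\infty$. Solving for the $t$-core generating function and assembling everything yields~\eqref{eq:addSCodd}.

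The main obstacle is purely bookkeeping on the Littlewood decomposition: one needs the precise statement that hooks divisible by $t$ correspond to $t$ times hooks of the quotient, and the way conjugation symmetry of $\lambda$ pairs the components $\nu_i$ so that each hook appears with the correct multiplicity (twice off the middle, once in the middle in the odd case). Once that is taken for granted, the remainder of the argument is a symmetrization over the quotient entries and an application of the Littlewood decomposition a second time (with trivial hook weight) to identify $\sum_{\omega\in\ccsc_{(t)}}q^{|\omega|}$; both cases then fall out by direct simplification.
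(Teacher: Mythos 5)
Your proof is correct and follows essentially the same route as the paper's: it is the $z=0$, $x=1$, $\rho_1\equiv 1$ specialization of the argument given for Theorem~\ref{thm:addition-multz} in Section~\ref{sec:proofthm} (fix the $t$-core, use the pairing of conjugate quotient components and the hook-length property $(P3)$/$(BG4)$ to symmetrize the sum over the quotient, then sum over cores). The only cosmetic difference is that you derive the generating function for self-conjugate $t$-cores by a second application of the Littlewood decomposition with trivial weights, whereas the paper quotes the closed form \eqref{AKPzcores}; both give the same factor.
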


For $t$ even, if we denote by $a_t^*(n)$ the number of hooks of length $t$ among the self-conjugate partitions of $n$, the authors of~\cite{AAOS} prove the following congruence property, which was originally conjectured in~\cite{BBCFW}.

\begin{Theorem}\label{thm:congSCeven}{\em \cite[Corollary 1.3]{AAOS}}
For all integers $n\geq0$ and $t\geq 2$ even, we have $a_{t}^*(n)\equiv 0\mod t$.
\end{Theorem}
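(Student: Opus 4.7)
The plan is to reproduce, in the self-conjugate setting, exactly the specialization trick that was used after Theorem~\ref{thm:addition} to derive~\eqref{congP}. Concretely, I would apply~\eqref{eq:addSCeven} (valid because $t$ is even) to the function $\rho$ defined by $\rho(h)=1$ if $h=t$ and $\rho(h)=0$ otherwise. Since $t$ is itself a multiple of $t$, the inner sum $\sum_{h\in\mathcal{H}_t(\lambda)}\rho(h)$ on the left-hand side of~\eqref{eq:addSCeven} reduces to $n_t(\lambda)$, so extracting the coefficient of $q^n$ on the left-hand side recovers precisely $a_t^*(n)=\sum_{\lambda\in\ccsc(n)}n_t(\lambda)$.

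For this choice of $\rho$, the auxiliary series $g_t(q)$ counts hooks of length $1$ over all partitions (because $\rho(th)\neq 0$ forces $h=1$), so $g_t$ is independent of $t$ and is given by~\eqref{ft1}. Substituting $g_t(q^{2t})=q^{2t}/\bigl((1-q^{2t})(q^{2t};q^{2t})_\infty\bigr)$ into the right-hand side of~\eqref{eq:addSCeven}, the factor $(q^{2t};q^{2t})_\infty$ cancels and one is left with
$$\sum_{\lambda\in\ccsc}q^{|\lambda|}n_t(\lambda)=\frac{t\,q^{2t}}{1-q^{2t}}\,(-q;q^2)_\infty.$$

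Both $q^{2t}/(1-q^{2t})=\sum_{k\geq 1}q^{2tk}$ and $(-q;q^2)_\infty=\prod_{j\geq 0}(1+q^{2j+1})$ are formal power series with non-negative integer coefficients, hence so is their product. Extracting the coefficient of $q^n$ on both sides of the above identity then shows that $a_t^*(n)$ equals $t$ times a non-negative integer, which is exactly the desired congruence.

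No serious obstacle is expected at this final step: the heavy lifting has already been carried out in~\cite{W} to produce Theorem~\ref{thm:additionSC}, and what remains is a routine specialization identical in spirit to the derivation of~\eqref{congP}. The only two points worth a quick sanity check are that the hypothesis ``$t$ even'' is precisely what allows the left-hand sum in~\eqref{eq:addSCeven} to range over the whole set $\ccsc$ (so that $a_t^*(n)$ is captured, rather than only its contribution from $\BGP_t$), and that the specialization $\rho=\mathbf{1}_{h=t}$ produces a right-hand side whose coefficients are manifestly non-negative integers, making the divisibility by $t$ immediate by inspection without any further arithmetic manipulation.
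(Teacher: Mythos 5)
Your proposal is correct and follows essentially the same route as the paper: the paper proves the stronger Theorem~\ref{thm:congSC} by specializing $\rho=\mathbf{1}_{h=t}$ in~\eqref{eq:addSCeven}, using~\eqref{ft1} to get $\sum_{\lambda\in\ccsc}q^{|\lambda|}n_t(\lambda)=t\,q^{2t}(1-q^{2t})^{-1}(-q;q^2)_\infty$, exactly as you do. The only cosmetic difference is that the paper identifies $(-q;q^2)_\infty$ as the generating function of $\ccsc$ to obtain the explicit formula $a_t^*(n)=t\sum_{j\geq1}|\ccsc(n-2jt)|$, whereas you simply observe the non-negativity and integrality of the coefficients, which suffices for the congruence.
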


\begin{Remark}\label{rk:unique}
There is no hope for a similar congruence for odd $t$ in general. Indeed, if $t=2t'+1$ is an odd number, note that the number of hooks of length $t$ among the self-conjugate partitions of $t$ is $1$: to see this, take $\lambda$ a self-conjugate partition such that $\lvert \lambda\rvert=t$, then the hook length of any box on the main diagonal of $\lambda$ is smaller than $\lvert \lambda\rvert$ unless $\lambda$ is the hook-shaped self-conjugate partition whose Frobenius notation is $\left(\begin{matrix}
t'\\
t'
\end{matrix}\right).$
Therefore without additional restrictions (for instance considering partitions in $\BGP_t$, or equivalently self-conjugate partitions such that their $\mu$ in the Littlewood decomposition is empty), the congruence does not hold for $n=t>1$.
\end{Remark}

In~\cite{AAOS}, the generating function of the numbers $a_t^*(n)$ is also provided, for even and odd $t$ (exhibiting no congruence in the odd case, see Remark~\ref{rk:unique}). 

In this paper, we first want to point out that for even $t$, such a generating function is an immediate consequence of a multiplication theorem also proved in~\cite{W}, while Theorem~\ref{thm:congSCeven} is derived from Theorem~\ref{thm:additionSC}~\eqref{eq:addSCeven} in the same way that~\eqref{congP} is a consequence of Theorem~\ref{thm:addition}. We will also derive analogous congruences for odd $t$, by using~\eqref{eq:addSCodd}: extend the definition of $a_t^*(n)$ to the odd $t$ case by replacing the set $\mathcal{SC}$ by $\BGP_t$.

\begin{Theorem}\label{thm:congSC}
For all integers $n\geq0$ and even $t\geq 2$, we have
$$a_{t}^*(n)=t\sum_{j\geq1}|\ccsc(n-2jt)|\quad \mbox{and}\quad a_{t}^*(n)\equiv 0\mod t,
$$
and for odd $t\geq3$, we have
$$a_{t}^*(n)=(t-1)\sum_{j\geq1}|\BGP_t(n-2jt)|\quad \mbox{and}\quad a_{t}^*(n)\equiv 0\mod (t-1).
$$
\end{Theorem}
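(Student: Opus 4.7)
The plan is to follow the exact recipe used in the introduction to derive~\eqref{congP}: set $\rho(h)=1$ if $h=t$ and $\rho(h)=0$ otherwise in Theorem~\ref{thm:additionSC}. With this choice, by definition $g_t(q)=\sum_{\lambda\in\mathcal{P}}q^{|\lambda|}n_1(\lambda)$, and so by~\eqref{ft1} we already know that $g_t(q)=q/((1-q)(q;q)_\infty)$, giving in particular
$$
g_t(q^{2t})=\frac{q^{2t}}{(1-q^{2t})(q^{2t};q^{2t})_\infty}.
$$
The left-hand sides of~\eqref{eq:addSCeven} and~\eqref{eq:addSCodd} become respectively $\sum_{\lambda\in\ccsc}q^{|\lambda|}n_t(\lambda)$ and $\sum_{\lambda\in\BGP_t}q^{|\lambda|}n_t(\lambda)$, whose coefficient of $q^n$ is $a_t^*(n)$ by definition.

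For even $t$, substituting the expression of $g_t(q^{2t})$ into~\eqref{eq:addSCeven} cancels the factor $(q^{2t};q^{2t})_\infty$ and yields
$$
\sum_{\lambda\in\ccsc}q^{|\lambda|}n_t(\lambda)=\frac{t\,q^{2t}}{1-q^{2t}}\,(-q;q^2)_\infty=t\Bigl(\sum_{j\geq1}q^{2jt}\Bigr)\sum_{\lambda\in\ccsc}q^{|\lambda|},
$$
where we used the classical identity $\sum_{\lambda\in\ccsc}q^{|\lambda|}=(-q;q^2)_\infty$ (diagonal hook lengths of a self-conjugate partition form a set of distinct odd integers). Extracting the coefficient of $q^n$ produces both the formula and the congruence mod $t$.

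For odd $t$, the same substitution into~\eqref{eq:addSCodd} gives
$$
\sum_{\lambda\in\BGP_t}q^{|\lambda|}n_t(\lambda)=\frac{(t-1)q^{2t}}{1-q^{2t}}\cdot\frac{(-q;q^2)_\infty}{(-q^t;q^{2t})_\infty}.
$$
It then remains to identify the second factor as the generating function $\sum_{\lambda\in\BGP_t}q^{|\lambda|}$. This is the one step beyond a pure substitution, and is the mild technical point of the proof. It follows directly from the Littlewood decomposition for self-conjugate partitions at odd $t$ recalled in the excerpt: taking generating functions of the bijection $\lambda\mapsto(\omega,\underline{\tilde{\nu}},\mu)\in\ccsc_{(t)}\times\ccp^{(t-1)/2}\times\ccsc$ with the appropriate weight $|\lambda|=|\omega|+2t\sum_i|\tilde\nu_i|+t|\mu|$ yields
$$
(-q;q^2)_\infty=\Bigl(\sum_{\omega\in\ccsc_{(t)}}q^{|\omega|}\Bigr)\frac{(-q^t;q^{2t})_\infty}{(q^{2t};q^{2t})_\infty^{(t-1)/2}}.
$$
Since $\BGP_t$ corresponds exactly to the sub-bijection where $\mu=\emptyset$, its generating function is obtained by dropping the factor $(-q^t;q^{2t})_\infty$ coming from $\mu$, giving the desired $(-q;q^2)_\infty/(-q^t;q^{2t})_\infty$.

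Combining the two equalities above, we obtain
$$
\sum_{\lambda\in\BGP_t}q^{|\lambda|}n_t(\lambda)=(t-1)\Bigl(\sum_{j\geq1}q^{2jt}\Bigr)\sum_{\lambda\in\BGP_t}q^{|\lambda|},
$$
and extracting the coefficient of $q^n$ yields both the claimed formula and the congruence $a_t^*(n)\equiv0\pmod{t-1}$. The only non-mechanical ingredient is the identification of the generating function of $\BGP_t$; apart from that, the proof is a direct specialization of Theorem~\ref{thm:additionSC}, parallel in structure to the derivation of~\eqref{congP} from Theorem~\ref{thm:addition}.
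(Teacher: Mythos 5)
Your proposal is correct and follows essentially the same route as the paper: specialize $\rho$ to the indicator of $h=t$ in Theorem~\ref{thm:additionSC}, use~\eqref{ft1}, and extract coefficients against the generating functions of $\ccsc$ and $\BGP_t$. The only (harmless) difference is that you re-derive $\sum_{\lambda\in\BGP_t}q^{|\lambda|}=(-q;q^2)_\infty/(-q^t;q^{2t})_\infty$ from the Littlewood decomposition, whereas the paper simply cites it from~\cite[(6.2)]{W}.
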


The first part of this theorem is proved in~\cite[Theorem 1.2 (1)]{AAOS}. A similar, more complicated, expression is provided in the odd case, which, as seen in Remark~\ref{rk:unique},  exhibits no congruence.  

\begin{Remark}
As soon as $\rho(t\mathbb{N})\subseteq\mathbb{Z}$ in Theorem~\ref{thm:additionSC}, Formula~\eqref{eq:addSCeven} (resp.~\eqref{eq:addSCodd}) will provide a $0$ mod $t$ (resp. $(t-1)$) congruence when extracting coefficients of $q^n$ on both sides. This is for instance the case in our proof of Theorem~\ref{thm:congSC}.
\end{Remark}

The second goal of this paper is to extend these kinds of congruences to a larger subset of partitions. Following Ayyer--Kumari in~\cite{AK} (more precisely, we rather take the slightly different notation from~\cite{A}: partitions $\lambda$ below are conjugates of the ones in~\cite{AK}), define for any integer $z$ the set $\ccp_z$ of $z$-asymmetric partitions $\lambda$ as those whose Frobenius notation is of the form
\begin{equation}\label{Frobenius}
\lambda=\left(\begin{matrix}
a_1+z&a_2+z&\dots&a_d+z\\
a_1&a_2&\dots&a_d
\end{matrix}\right).
\end{equation}

\begin{Remark}\label{rk:pzferrers}
In terms of Ferrers diagrams, a partition $\lambda$ belongs to $\ccp_z$ if and only if $\lambda$ is made of a self-conjugate partition to which, if $z\geq0$ (resp $z<0$), a rectangle of height (resp. width) $d$ and width $z$ (resp. height $-z$) has been added to the right of (resp. below) its Durfee square of size $d$.
\end{Remark}
Thanks to this remark, one can compute the generating function of $\ccp_z$, which factorizes, either by using the $q$-exponential~\cite[(II.2)]{GR}, or by the immediate bijection between $\ccp_z$ and partitions with distinct parts of size at least $1+|z|$ and congruent to $1+|z|$ modulo $2$ (see~\cite[Corollary~6.2]{AK}, in which $z$ should be replaced by $|z|$):
$$
\sum_{\lambda\in\ccp_z}q^{|\lambda|}=\sum_{d\geq0}\frac{q^{d^2+d|z|}}{(q^2;q^2)_d}=(-q^{1+|z|};q^2)_\infty.
$$

Note that if $\lambda\in\ccp_z$, then its conjugate $\lambda'$ belongs to $\ccp_{-z}$. In~\cite{L}, Littlewood proves that the Schur function $s_\lambda$ with $tn$ variables vanishes if the $t$-core of $\lambda$ is non-empty, and otherwise it factorizes as a product of Schur functions indexed by the
partitions forming its $t$-quotient. In~\cite{AK}, Ayyer--Kumari prove
analogous factorisation theorems for the characters of the classical groups $O(2n,\mathbb{C})$, $Sp(2n,\mathbb{C})$, and $SO(2n+1,\mathbb{C})$ using Littlewood’s method: they show that the twisted characters are nonzero if and only if the $t$-core of the associated partition is in $\ccp_z$ for $z=1$, $z=-1$, and $z=0$, respectively. Note that the set of $1$-asymmetric partitions is called $\mathcal{DD}$ (for doubled distinct as illustrated in Figure \ref{fig:dd}) in~\cite{GKS}, while the set of $-1$-asymmetric partitions can be called $\mathcal{DD}'$, as its elements are conjugates of the ones in $\mathcal{DD}$, as illustrated in Figure \ref{fig:ddprime}. Of course the set of $0$-asymmetric partitions is the set $\ccsc$ of self-conjugate partitions.

\begin{figure}
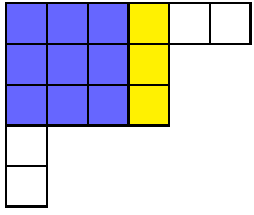
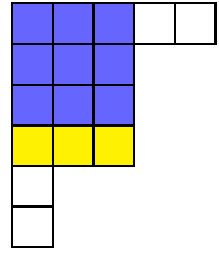

\centering
\begin{subfigure}[b]{.49\textwidth}
\centering
\begin{ytableau}
  *(blue!60)  &*(blue!60) & *(blue!60) & *(yellow)& & \\
 *(blue!60) &*(blue!60) &*(blue!60)&*(yellow) & \none \\
 *(blue!60) & *(blue!60)  &*(blue!60)&*(yellow) &\none \\
  &\none &\none &\none & \none \\
  &\none &\none &\none & \none \\
  \none & \none
\end{ytableau}
\caption{$(6,4,4,1,1)\in \ccdd$}
\label{fig:dd}
\end{subfigure}
\hfill
\begin{subfigure}[b]{.49\textwidth}
\centering
\begin{ytableau}
  *(blue!60)  &*(blue!60) & *(blue!60) & & \\
 *(blue!60) &*(blue!60) &*(blue!60)& \none \\
 *(blue!60) & *(blue!60)  &*(blue!60) &\none \\
 *(yellow)  & *(yellow) & *(yellow)\\
  &\none &\none &\none & \none \\
  &\none &\none &\none & \none \\
\end{ytableau}
\caption{$(5,3,3,3,1,1)\in \ccdd'$}
\label{fig:ddprime}
\end{subfigure}
\caption{An example of doubled distinct and its conjugate}
\end{figure}

These results are extended to the universal characters of the aforementioned groups by Albion through a new approach in~\cite{A, A1}, where a crucial tool that is proved, is the characterization of the Littlewood decomposition applied to partitions of $\ccp_z$, therefore generalizing work of Garvan--Kim--Stanton~\cite{GKS} who describe the cores for partitions in the sets $\ccp_0=\ccsc$ and $\ccp_1=\mathcal{DD}$. In Section~\ref{sec:lit}, we will describe precisely Albion's result (note that only the case $0\leq z\leq t-1$ is provided in~\cite{A}, the negative values of $z$ are then derived by conjugation using for instance~\eqref{caractmotsPzet-z} below, while Albion extends this to larger values of $z$ in~\cite{A1}, but with cumbersome modifications). 

For now, as in the above special case of self-conjugate partitions in $\ccp_0$ when $t$ is odd, we will consider a subset of $\ccp_z$ in which partitions behave in an appropriate way with respect to the Littlewood decomposition. 

\begin{Definition}\label{def:BGzt}
For integers $t\geq2$ and $0\leq z\leq t-1$, let $\BGP_{z,t}$ denote the set of partitions in $\ccp_z$ such that in their Frobenius notation~\eqref{Frobenius}, $(a_j+k)/t\notin\mathbb{N}$ and $(2a_j+z+1)/t\notin2\mathbb{N}+1$, for all $1\leq j\leq d$ and $1\leq k\leq z$. 
\end{Definition}

Note that the integers $2a_j+z+1$ are the principal hook lengths $h_{(j,j)}(\lambda)$, and that the first condition is empty for $z=0$. Therefore when $t$ is even (resp. odd), the set $\BGP_{0,t}$ is $\ccsc$ (resp. $\BGP_t$). We will prove the following addition-multiplication theorem for partitions in $\BGP_{z,t}$.

\begin{Theorem}\label{thm:addition-multz}
Let $t\geq2$ and $0\leq z\leq t-1$ be integers, and $\rho_1$, $\rho_2$ be functions defined on $\mathbb{N}$. Let $f_{t}$ and $g_{t}$ be the following formal power series:
$$
f_{t}(q):=\sum_{\lambda\in\mathcal{P}}q^{|\lambda|}\prod_{h\in\mathcal{H}(\lambda)}\rho_1(th)^2\quad\mbox{and}\quad g_{t}(q):=\sum_{\lambda\in\mathcal{P}}q^{|\lambda|}\prod_{h\in\mathcal{H}(\lambda)}\rho_1(th)^2\sum_{h\in\mathcal{H}(\lambda)}\rho_2(th).
$$
Then we have
\begin{multline}\label{eq:addz}
\sum_{\lambda\in \BGP_{z,t}}q^{|\lambda|}x^{|\mathcal{H}_t(\lambda)|}\prod_{h\in\mathcal{H}_{t}(\lambda)}\rho_1(h)\sum_{h\in\mathcal{H}_{t}(\lambda)}\rho_2(h)\\
=2\lfloor(t-z)/2\rfloor(f_t(x^2q^{2t}))^{\lfloor\frac{t-z}{2}\rfloor-1}g_t(x^2q^{2t})\prod_{i=0}^{\lfloor (t-z)/2\rfloor-1}(-q^{2i+z+1},-q^{2t-2i-z-1},q^{2t};q^{2t})_\infty.
\end{multline}
\end{Theorem}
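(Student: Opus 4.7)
The plan is to extend to $\BGP_{z,t}$ the strategy that \cite{W} applied to self-conjugate partitions when proving Theorem~\ref{thm:additionSC}, using the refined Littlewood decomposition for $z$-asymmetric partitions established by Albion in \cite{A,A1}. First, I would apply this decomposition to $\lambda\in\BGP_{z,t}$. It sends any $\lambda\in\ccp_z$ to its $z$-asymmetric $t$-core $\omega\in(\ccp_z)_{(t)}$, a tuple of quotient partitions carrying an internal symmetry (analogous to the conjugation symmetry pairing quotient components in the self-conjugate case), and possibly an extra boundary partition $\mu$. The two conditions defining $\BGP_{z,t}$ in Definition~\ref{def:BGzt} are designed to force $\mu=\emptyset$: the second generalizes the $\BGP_t$ condition \eqref{BGt} on principal hooks, while the first deals with the rectangular slab appended to the Durfee square according to Remark~\ref{rk:pzferrers}. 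The net outcome is a bijection
$$\lambda\in\BGP_{z,t}\;\longleftrightarrow\;\bigl(\omega,(\nu_1,\dots,\nu_n)\bigr),$$
where $n:=\lfloor(t-z)/2\rfloor$, $\omega$ is a $z$-asymmetric $t$-core subject to the $\BGP_{z,t}$ constraint on principal hooks, and each $\nu_i\in\ccp$ is arbitrary.

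Second, I would track the relevant statistics through this bijection. Standard properties of the Littlewood decomposition combined with the pairing symmetry give
$$|\lambda|=|\omega|+2t\sum_{i=1}^n|\nu_i|,\qquad |\mathcal{H}_t(\lambda)|=2\sum_{i=1}^n|\nu_i|,$$
and each hook $h\in\mathcal{H}(\nu_i)$ corresponds to exactly two hooks of length $th$ in $\mathcal{H}_t(\lambda)$. Consequently
$$\prod_{h\in\mathcal{H}_t(\lambda)}\rho_1(h)=\prod_{i=1}^n\prod_{h\in\mathcal{H}(\nu_i)}\rho_1(th)^2,\qquad \sum_{h\in\mathcal{H}_t(\lambda)}\rho_2(h)=2\sum_{i=1}^n\sum_{h\in\mathcal{H}(\nu_i)}\rho_2(th).$$
The left-hand side of \eqref{eq:addz} therefore factorizes as a product of the generating function for the cores $\omega$ and a sum over the $n$-tuples $(\nu_i)$. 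The latter decouples by a Leibniz-rule manipulation: choosing which index $i_0$ carries the $\rho_2$-sum and letting the remaining $n-1$ factors run freely yields $2n\,f_t(x^2q^{2t})^{n-1}g_t(x^2q^{2t})$, which is exactly the first three factors of the claimed right-hand side.

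It remains to show that the generating function of $z$-asymmetric $t$-cores in $\BGP_{z,t}$ equals the infinite product
$$\prod_{i=0}^{n-1}(-q^{2i+z+1},-q^{2t-2i-z-1},q^{2t};q^{2t})_\infty.$$
This can be done by combining Albion's characterisation of such cores via $t$-abacus or beta-set data with the principal-hook condition in Definition~\ref{def:BGzt}, and then summing the resulting theta-type series via a Jacobi-triple-product identity. A sanity check against $z=0$ (both parities of $t$) recovers the core factors appearing in Theorem~\ref{thm:additionSC}. The main obstacle is precisely this last step: Albion's decomposition is stated only for $0\le z\le t-1$ in \cite{A} with a more cumbersome extension in \cite{A1}, and one has to verify carefully that the two conditions of Definition~\ref{def:BGzt} remove the boundary partition $\mu$ rather than merely restricting some of the $a_j$. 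Modulo this bookkeeping, the argument is a clean transfer of the self-conjugate proof of \cite{W}.
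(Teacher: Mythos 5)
Your proposal follows essentially the same route as the paper's proof: apply the Littlewood decomposition, use the symmetry and emptiness conditions of Proposition~\ref{prop:bgztLittlewood} to reduce the quotient to $n=\lfloor(t-z)/2\rfloor$ unconstrained partitions each counted twice in weight and hook data, and factor the left-hand side into $2n\,f_t(x^2q^{2t})^{n-1}g_t(x^2q^{2t})$ times the generating function of the admissible $t$-cores. The one step you single out as the main obstacle --- computing the generating function of the cores subject to the principal-hook condition --- is not actually an obstacle: by Remark~\ref{rk:corebgzt} the conditions of Definition~\ref{def:BGzt} are vacuous on $t$-cores, so the $\BGP_{z,t}$ $t$-cores are exactly the $\ccp_z$ $t$-cores and the required product is the already-known Ayyer--Kumari generating function~\eqref{AKPzcores}.
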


Note that for $z=0$, $x=1$ and $\rho_1$ the constant function equal to $1$, this is Theorem~\ref{thm:additionSC} (see the generating functions in~\eqref{eq:GFz=0}). As a consequence, we also derive the following result.

\begin{Corollary}\label{coro:congz}
For all integers $t\geq2$ and $0\leq z\leq t-1$, we have the generating series
$$\sum_{\lambda\in \BGP_{z,t}}q^{|\lambda|}y^{n_{t}(\lambda)}=((1-y^2)q^{2t};q^{2t})_{\infty}^{\lfloor\frac{t-z}{2}\rfloor}\prod_{i=0}^{\lfloor (t-z)/2\rfloor-1}(-q^{2i+z+1},-q^{2t-2i-z-1};q^{2t})_\infty,$$
where $n_{t}(\lambda)$ is the number of hooks of length $t$ in $\lambda$. Moreover for all integers $n\geq0$, if $a_{z,t}(n)$ denotes the number of hooks of length $t$ among all partitions of $n$ in $\BGP_{z,t}$, then  
$$a_{z,t}(n)=2\lfloor(t-z)/2\rfloor\sum_{j\geq1}|\BGP_{z,t}(n-2tj)|\quad \mbox{and}\quad a_{z,t}(n)\equiv 0\mod (2\lfloor(t-z)/2\rfloor).
$$
\end{Corollary}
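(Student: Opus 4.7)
The plan is to derive both parts of the corollary from Theorem~\ref{thm:addition-multz} by suitable specializations of the functions $\rho_1,\rho_2$, mirroring how~\eqref{congP} is obtained from Theorem~\ref{thm:addition} in the introduction. Throughout, write $m:=\lfloor(t-z)/2\rfloor$ for brevity.

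For the generating function identity (first assertion), I would take $x=1$ and $\rho_1(h)=y$ if $h=t$, $\rho_1(h)=1$ otherwise. Then $\prod_{h\in\mathcal{H}_t(\lambda)}\rho_1(h)=y^{n_t(\lambda)}$, while $\rho_1(th)^2$ equals $y^2$ if $h=1$ and $1$ otherwise, so~\eqref{gfn1} (applied with $y$ replaced by $y^2$) gives $f_t(q)=((1-y^2)q;q)_\infty/(q;q)_\infty$. What is needed beyond Theorem~\ref{thm:addition-multz} is the pure multiplication companion
\begin{equation*}
\sum_{\lambda\in\BGP_{z,t}}q^{|\lambda|}\prod_{h\in\mathcal{H}_t(\lambda)}\rho_1(h)=f_t(q^{2t})^{m}\prod_{i=0}^{m-1}(-q^{2i+z+1},-q^{2t-2i-z-1},q^{2t};q^{2t})_\infty,
\end{equation*}
which is implicit in the Littlewood decomposition underlying the proof of Theorem~\ref{thm:addition-multz}: the multiplicative factorization over the $t$-core and the $m$ quotient components of $\BGP_{z,t}$ survives whether or not an additive weight $\sum_h\rho_2(h)$ is present. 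Substituting the above expression for $f_t$, the $m$ copies of $(q^{2t};q^{2t})_\infty$ appearing in the product cancel the corresponding denominators, leaving the stated formula.

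For the divisibility assertion, I would specialize Theorem~\ref{thm:addition-multz} to $x=1$, $\rho_1\equiv1$, and $\rho_2(h)=1$ if $h=t$ and $0$ otherwise. Then the left-hand side of~\eqref{eq:addz} becomes $\sum_{\lambda\in\BGP_{z,t}}q^{|\lambda|}n_t(\lambda)$, while $f_t(q)=(q;q)_\infty^{-1}$ and $g_t(q)=q/((1-q)(q;q)_\infty)$ by~\eqref{ft1}. Plugging these into~\eqref{eq:addz}, all factors of $(q^{2t};q^{2t})_\infty$ cancel and one obtains
\begin{equation*}
\sum_{\lambda\in\BGP_{z,t}}q^{|\lambda|}n_t(\lambda)=\frac{2m\,q^{2t}}{1-q^{2t}}\prod_{i=0}^{m-1}(-q^{2i+z+1},-q^{2t-2i-z-1};q^{2t})_\infty.
\end{equation*}
The remaining product is the generating series of $\BGP_{z,t}$ (as follows from the first assertion at $y=1$). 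Expanding $q^{2t}/(1-q^{2t})=\sum_{j\geq1}q^{2tj}$ and extracting the coefficient of $q^n$ on both sides yields $a_{z,t}(n)=2m\sum_{j\geq1}|\BGP_{z,t}(n-2tj)|$, from which the congruence modulo $2m$ is immediate.

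The principal obstacle is the rigorous justification of the pure multiplication companion used in the first step, since Theorem~\ref{thm:addition-multz} is stated only as an addition-multiplication identity. I would handle this by revisiting the Littlewood decomposition of $\BGP_{z,t}$ described in Section~\ref{sec:lit} (following Albion's extension of Garvan--Kim--Stanton and the second author's work) and verifying that the multiplicative factorization used in the proof of Theorem~\ref{thm:addition-multz} goes through verbatim when no additive term is inserted; alternatively, one can bypass this step by combining the two specializations of $\rho_1,\rho_2$ above within Theorem~\ref{thm:addition-multz} to read off a first-order linear equation in $y$ for $G(q,y):=\sum_{\lambda\in\BGP_{z,t}}q^{|\lambda|}y^{n_t(\lambda)}$, and solving it as a formal power series in $q$ with boundary value at $y=1$.
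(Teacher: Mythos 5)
Your proposal is correct, and the second half (the specialization $x=1$, $\rho_1\equiv1$, $\rho_2(h)=\mathds{1}_{h=t}$, cancellation of the $(q^{2t};q^{2t})_\infty$ factors, and coefficient extraction against the generating function of $\BGP_{z,t}$) coincides exactly with the paper's argument. For the generating-function identity, however, you take a different route from the paper: you invoke a ``pure multiplication companion'' of Theorem~\ref{thm:addition-multz} and correctly flag that it is not stated anywhere, proposing to re-run the Littlewood-decomposition computation of Section~\ref{sec:proofthm} without the additive weight. That would indeed work (it is the exact analogue of Theorem~\ref{thm:multsc} for $\BGP_{z,t}$, and dropping the factor $\sum_{h}\rho_2(th)$ from~\eqref{sumprod} leaves precisely $f_t(x^2q^{2t})^{\lfloor(t-z)/2\rfloor}$ times the core generating function~\eqref{AKPzcores}), but the paper avoids proving any new theorem: it sets $\rho_2\equiv1$, so that the additive factor becomes $|\mathcal{H}_t(\lambda)|$ and hence $g_t(x^2q^{2t})=\frac{x}{2}\frac{d}{dx}f_t(x^2q^{2t})$; the right-hand side of~\eqref{eq:addz} is then $x$ times the $x$-derivative of the desired product, and one recovers the multiplication statement by dividing by $x$, integrating in $x$, and setting $x=1$. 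This is where the extra variable $x$ in Theorem~\ref{thm:addition-multz} earns its keep, and it is essentially your alternative ``differential equation'' idea, executed in $x$ rather than $y$. Whichever route you take, note the small point that partitions with $\mathcal{H}_t(\lambda)=\emptyset$ (the $\ccp_z$ $t$-cores) contribute $0$ to the left-hand side of~\eqref{eq:addz}, so after integrating you must fix the constant of integration (equivalently, the boundary value in your ODE variant) by matching the $x^0$ coefficients, which is exactly the $t$-core generating function~\eqref{AKPzcores}; your direct re-derivation of the multiplication companion sidesteps this but costs a page of bookkeeping that the paper's integration trick makes unnecessary.
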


\begin{Remark}\label{rk:uniquez} 
As in Remark~\ref{rk:unique}, there is no hope for a general similar congruence for the whole set $\ccp_z$ when $t-z$ is odd. Indeed, for any $t=2m+z+1$, with $m\in\mathbb{N}$, the number of hooks of length $t$ among the $z$-asymmetric partitions of $t$ is $1$: to see this, take $\lambda\in\ccp_z$, then the hook length of any box on the main diagonal of $\lambda$ is smaller than $\lvert \lambda\rvert$ unless $\lambda$ is the hook-shaped partition whose Frobenius notation is $\left(\begin{matrix}m+z\\m\end{matrix}\right)$. Note that this partition belongs to $\ccp_z$, but not to $\BGP_{z,t}$, as it contradicts the second condition of Definition~\ref{def:BGzt}.  Similarly, one can not get a general congruence for the whole set $\ccp_z$ when $z>0$ and $t-z$ is even. However hook lengths on the principal diagonal of partitions in $\ccp_z$ cannot have the form $t=2m+z$, therefore one has to consider other weights to find a counterexample. For instance, one can prove that there is only one hook of length $t$ among the $z$-asymmetric partitions of weight $2t-z+1$. Moreover, such a hook only appears in the partition whose Frobenius notation is of the form $\left(\begin{matrix}t\\t-z\end{matrix}\right)$, which does not belong to $\BGP_{z,t}$. 
Therefore without additional restrictions (for instance considering partitions in $\BGP_{z,t}$), the congruence does not hold for $n=t>1$ (resp. $n=2t-z+1$) when $t-z$ is odd (resp. $z>0$ and $t-z$ is even).
\end{Remark}

Setting $y=1$ in the first formula of Corollary~\ref{coro:congz}, we get the generating function for our set $\BGP_{z,t}$, exhibiting that only the empty partition belongs to $\BGP_{t-1,t}$, which can also be seen directly from Definition~\ref{def:BGzt}.  Besides, the above congruence becomes Theorem~\ref{thm:congSC} when $z=0$. If moreover $t$ is even, the generating function reduces to~\cite[Theorem 1.1 (1)]{AAOS}. The special case $z=1$ corresponding to the set $\ccp_1=\mathcal{DD}$ yields elegant results similar to Theorem~\ref{thm:congSC} (and Corollary~~\ref{coro:fgSC}) for $\ccp_0=\mathcal{SC}$, therefore we gather them below.

\begin{Corollary}\label{coro:congDD}
For all integers $t\geq2$, we have 
\begin{equation}\label{BG1t}
\BGP_{1,t}=\lbrace\lambda\in\ccp_1=\mathcal{DD} \mid \forall i \in\lbrace 1,\dots,d\rbrace, t\nmid h_{(i,i)}(\lambda)\rbrace,
\end{equation}
and the generating series
$$\sum_{\lambda\in \BGP_{1,t}}q^{|\lambda|}y^{n_{t}(\lambda)}=((1-y^2)q^{2t};q^{2t})_{\infty}^{\lfloor\frac{t-1}{2}\rfloor}(-q^2;q^2)_\infty\times\left\{\displaystyle\begin{array}{ll}
(-q^{2t};q^{2t})_\infty^{-1}&\mbox{for $t$ odd}\\
\\
(-q^{t};q^{t})_\infty^{-1}&\mbox{for $t$ even,}
\end{array}\right.$$
and moreover for all integers $n\geq0$, if $a_{1,t}(n)$ denotes the number of hooks of length $t$ among all partitions of $n$ in $\BGP_{1,t}$, then 
$$a_{1,t}(n)=2\lfloor(t-1)/2\rfloor\sum_{j\geq1}|\BGP_{1,t}(n-2tj)|\quad \mbox{and}\quad a_{1,t}(n)\equiv 0\mod (2\lfloor(t-1)/2\rfloor).
$$
\end{Corollary}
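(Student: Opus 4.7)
The plan is to derive the three assertions of Corollary~\ref{coro:congDD} directly from Corollary~\ref{coro:congz} specialized at $z=1$, after a short verification of the combinatorial characterization and an elementary reshuffling of $q$-Pochhammer products.

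For the characterization~\eqref{BG1t}, I would unfold Definition~\ref{def:BGzt}. Any $\lambda\in\ccp_1$ has Frobenius notation~\eqref{Frobenius} with $z=1$, so in particular $b_j=a_j$ and the principal hook lengths are $h_{(j,j)}(\lambda)=2a_j+2$. With $z=1$ the index $k$ only takes the value $1$, so the two defining conditions of $\BGP_{z,t}$ become: first, $a_j+1\not\equiv 0\pmod t$, equivalent to $h_{(j,j)}(\lambda)=2(a_j+1)$ not being a multiple of $2t$; and second, $(2a_j+2)/t\notin 2\mathbb{N}+1$, equivalent to $h_{(j,j)}(\lambda)$ not being an odd multiple of $t$. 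The conjunction of the two is exactly $t\nmid h_{(j,j)}(\lambda)$, which proves~\eqref{BG1t}.

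For the generating series, I would substitute $z=1$ into Corollary~\ref{coro:congz} and simplify the product $\prod_{i=0}^{\lfloor(t-1)/2\rfloor-1}(-q^{2i+2},-q^{2t-2i-2};q^{2t})_\infty$ depending on the parity of $t$. If $t=2m+1$, as $i$ ranges from $0$ to $m-1$, the exponents $2i+2$ cover $\{2,4,\dots,2m\}$ and $2t-2i-2$ cover $\{2m+2,\dots,4m\}$, so together they are precisely all nonzero even residues modulo $2t=4m+2$ except $2t$ itself. Using the factorization $(-q^2;q^2)_\infty=\prod_{r=1}^{t}(-q^{2r};q^{2t})_\infty$, the product therefore reduces to $(-q^2;q^2)_\infty/(-q^{2t};q^{2t})_\infty$. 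If $t=2m$, then $i$ runs from $0$ to $m-2$ and the exponents miss the two residues $r=m$ and $r=2m$ modulo $2t$, giving $(-q^2;q^2)_\infty/(-q^t,-q^{2t};q^{2t})_\infty$; I would then consolidate the denominator via $(-q^t,-q^{2t};q^{2t})_\infty=(-q^t;q^t)_\infty$ to reach $(-q^2;q^2)_\infty/(-q^t;q^t)_\infty$. These two cases match exactly the closed form stated in Corollary~\ref{coro:congDD}.

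The explicit formula for $a_{1,t}(n)$ together with the divisibility $a_{1,t}(n)\equiv 0\pmod{2\lfloor(t-1)/2\rfloor}$ are nothing but the $z=1$ instance of the second half of Corollary~\ref{coro:congz}, so no further work is required. The only genuine difficulty in the whole proof is the bookkeeping in the $q$-Pochhammer simplification, ensuring that the even and odd cases unify into the symmetric form in the statement; once this is cleared, all three assertions drop out at once.
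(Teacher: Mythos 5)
Your proposal is correct and follows essentially the same route as the paper: the characterization~\eqref{BG1t} is obtained by unfolding Definition~\ref{def:BGzt} at $z=1$ and observing that the two conditions exclude exactly the even and the odd multiples of $t$ among the principal hooks $h_{(j,j)}=2a_j+2$, and the generating series and congruence follow from Corollary~\ref{coro:congz} with the same $q$-Pochhammer regrouping (all even residues mod $2t$ except $2t$ in the odd case, except $t$ and $2t$ in the even case). The paper's proof differs only in also recalling the explicit form of the Littlewood decomposition on $\mathcal{DD}$ as context, which is not needed for the argument.
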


Note that, although it is not immediate at first sight from Definition~\ref{def:BGzt}, the set  $\BGP_{1,t}$ is described in~\eqref{BG1t} similarly to~\eqref{BGt}. One could define the set $\BGP_{-1,t}$ whose elements are the conjugates of the partitions in $\BGP_{1,t}$, and prove similar congruence results related to hook lengths of partitions in $\ccp_{-1}=\mathcal{DD}'$. Nevertheless, it seems hopeless to get nice similar results for $|z|\geq t$ in view of~\cite[Corollary~2.4]{A1}.

Finally, we exhibit a last consequence of Theorem~\ref{thm:addition-multz}, which can be seen as a modular version for the set $\BGP_{z,t}$ of the Nekrasov--Okounkov identity~\eqref{eq:no} . The modular analogue of the latter was first shown in~\cite{H} and then derived from the multiplication theorem in~\cite{HJ}. More generally, there are many modular versions of classical combinatorial identities which are derived from the addition-multiplication theorem in~\cite{HJ}, and whose self-conjugate versions are proved in~\cite{W}. All these results could be lifted to the $\BGP_{z,t}$ case as consequences of Theorem~\ref{thm:addition-multz}, but we only highlight the following result here.

\begin{Corollary}\label{coro:NOBGzt}
For all integers $t\geq2$ and $0\leq z\leq t-1$, and any complex number $u$, we have 
$$
\sum_{\lambda\in \BGP_{z,t}}q^{|\lambda|}\prod_{h\in\mathcal{H}_{t}(\lambda)}\left(1-\frac{u}{h^2}\right)^{1/2}=(q^{2t};q^{2t})_{\infty}^{\lfloor\frac{t-z}{2}\rfloor u/t^2}\prod_{i=0}^{\lfloor (t-z)/2\rfloor-1}(-q^{2i+z+1},-q^{2t-2i-z-1};q^{2t})_\infty.$$
\end{Corollary}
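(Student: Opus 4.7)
The plan is to differentiate both sides of the claimed identity with respect to $u$ and observe that the resulting equality is precisely Theorem~\ref{thm:addition-multz} applied to suitable weights. Denote by $F(u,q)$ and $C(u,q)$ the left- and right-hand sides of the corollary, respectively, viewed as formal power series in $q$ whose coefficients are real-analytic in $u$ near $0$. Term-by-term differentiation together with
\[
\frac{d}{du}\prod_{h}(1-u/h^2)^{1/2} = -\tfrac{1}{2}\prod_{h}(1-u/h^2)^{1/2}\sum_{h}\frac{1/h^2}{1-u/h^2}
\]
shows that $\partial F/\partial u$ equals the left-hand side of~\eqref{eq:addz} with $x=1$, $\rho_1(h)=(1-u/h^2)^{1/2}$, and $\rho_2(h)=-1/(2h^2(1-u/h^2))$.

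Next I would evaluate $f_t$ and $g_t$ under these choices. Since $\rho_1(th)^2 = 1-(u/t^2)/h^2$, the Nekrasov--Okounkov identity~\eqref{eq:no}, with $u$ replaced by $u/t^2$, gives $f_t(q)=(q;q)_\infty^{u/t^2-1}$. For $g_t$, I would recognise it (up to a constant factor) as the $u'$-derivative of $f_t$, where $u'=u/t^2$: applying $\prod_h(1-u'/h^2)\sum_h\frac{1/h^2}{1-u'/h^2}= -\frac{d}{du'}\prod_h(1-u'/h^2)$ inside the sum over partitions yields
\[
g_t(q)=\tfrac{1}{2t^2}\tfrac{d}{du'}(q;q)_\infty^{u'-1}=\tfrac{1}{2t^2}f_t(q)\log(q;q)_\infty.
\]

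Plugging these values into the right-hand side of~\eqref{eq:addz}, and absorbing the $\lfloor(t-z)/2\rfloor$ factors of $(q^{2t};q^{2t})_\infty$ appearing in the infinite products into the power of $f_t(q^{2t})$, the exponent $\lfloor(t-z)/2\rfloor(u/t^2-1)+\lfloor(t-z)/2\rfloor=\lfloor(t-z)/2\rfloor u/t^2$ emerges, and the right-hand side simplifies to
\[
\frac{\lfloor(t-z)/2\rfloor}{t^2}\log(q^{2t};q^{2t})_\infty\cdot(q^{2t};q^{2t})_\infty^{\lfloor(t-z)/2\rfloor u/t^2}\prod_{i=0}^{\lfloor(t-z)/2\rfloor-1}(-q^{2i+z+1},-q^{2t-2i-z-1};q^{2t})_\infty,
\]
which is exactly $\partial C/\partial u$. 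Integrating in $u$ from $0$ gives $F(u,q)-F(0,q)=C(u,q)-C(0,q)$, and Corollary~\ref{coro:congz} at $y=1$ shows $F(0,q)=C(0,q)$, both equalling the generating function of $\BGP_{z,t}$; hence $F=C$. The main obstacle is the clean identification of $g_t$ as a logarithmic derivative of $f_t$, which is what makes the right-hand side of~\eqref{eq:addz} collapse into a perfect $u$-derivative; once this is in place, the result follows by integration and the known base case at $u=0$.
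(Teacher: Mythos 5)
Your proof is correct, and it reaches the identity by a route that is parallel to, but not identical with, the paper's. Both arguments feed $\rho_1(h)=(1-u/h^2)^{1/2}$ into Theorem~\ref{thm:addition-multz} and evaluate $f_t(q)=(q;q)_\infty^{u/t^2-1}$ via the Nekrasov--Okounkov formula~\eqref{eq:no}; the difference lies in which parameter carries the derivative. The paper keeps $x$ alive, takes $\rho_2\equiv1$ so that the sum over $\mathcal{H}_t(\lambda)$ produces the factor $|\mathcal{H}_t(\lambda)|$, uses the relation $g_t(x^2q^{2t})=\tfrac{x}{2}\tfrac{d}{dx}f_t(x^2q^{2t})$ from~\eqref{eq:rho2=1} to see the right-hand side as $\tfrac{1}{x}\cdot\tfrac{d}{dx}$ of a perfect power, and then integrates in $x$ and sets $x=1$ (the constant of integration being fixed by matching constant terms in $x$, i.e.\ the $t$-core generating function). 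You instead set $x=1$ from the outset, choose $\rho_2(h)=-\tfrac{1}{2}\,\tfrac{1/h^2}{1-u/h^2}$ so that the left-hand side of~\eqref{eq:addz} becomes $\partial F/\partial u$, and identify $g_t(q)=\tfrac{1}{2t^2}f_t(q)\log(q;q)_\infty$ as the $u$-logarithmic derivative of $f_t$; your computations of $f_t$, $g_t$, and the resulting simplification of the exponent $\lfloor(t-z)/2\rfloor(u/t^2-1)+\lfloor(t-z)/2\rfloor$ all check out, and your base case $F(0,q)=C(0,q)$ via Corollary~\ref{coro:congz} at $y=1$ correctly pins down the constant of integration. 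What your version buys is that it never needs the extra variable $x$ or the identity~\eqref{eq:rho2=1}; what it costs is the separate appeal to Corollary~\ref{coro:congz} for the base case and a slightly more delicate justification of differentiating under the sum in $u$ (unproblematic for $|u|$ small, with the general statement following by analytic continuation of each $q$-coefficient, exactly as for~\eqref{eq:no} itself).
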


This paper is organized as follows. In Section~\ref{sec:congSC} we study the case of self-conjugate partitions, proving in particular Theorem~\ref{thm:congSC}. Then in Section~\ref{sec:lit}, we recall the description of the Littlewood decomposition and we study how partitions in $\ccp_z$ and $\BGP_{z,t}$ behave under this bijection. In Section~\ref{sec:proofthm}, we prove Theorem~\ref{thm:addition-multz}, and finally in Section~\ref{sec:proofcoro} we derive Corollaries~\ref{coro:congz}--\ref{coro:NOBGzt}. 

\section{Consequences of the addition theorem and a multiplication theorem for $\mathcal{SC}$}\label{sec:congSC}

We first prove Theorem~\ref{thm:congSC} as a direct consequence of Theorem~\ref{thm:additionSC}. Then we discuss related results. Note that all the results proved in this section are consequences of our Theorem~\ref{thm:addition-multz}, but our goal is to highlight here in the case of self-conjugate partitions studied in~\cite{AAOS} how addition theorems imply immediately congruences, while multiplication theorems yield interesting generating series.

\begin{proof}[Proof of Theorem~\ref{thm:congSC}]
 Consider $\rho(h)=1$ if $h=t$ and $0$ otherwise. Then using~\eqref{ft1}, Formula~\eqref{eq:addSCeven} translates for even $t$ into 
\begin{equation}\label{addscrho1}\sum_{\lambda\in\ccsc}q^{|\lambda|}n_t(\lambda)= t\,\frac{q^{2t}}{1-q^{2t}}(-q;q^2)_\infty,
\end{equation}
while~\eqref{eq:addSCodd} translates for odd $t$ into 
$$\sum_{\lambda\in \BGP_t}q^{|\lambda|}n_t(\lambda)=(t-1)\frac{q^{2t}}{1-q^{2t}}\frac{(-q;q^2)_\infty}{(-q^t;q^{2t})_\infty},
$$
where we recall from the introduction that $n_t(\lambda)$ is the number of hooks of length $t$ in the partition $\lambda$. As for $t$ even  
$a_{t}^*(n)=\sum_{\lambda\in\ccsc(n)}n_t(\lambda)$, while for $t$ odd $a_{t}^*(n)=\sum_{\lambda\in\BGP_t(n)}n_t(\lambda)$, we get our results by extracting the coefficient of $q^n$ on both sides of these two formulas and recalling the generating functions (see for instance~\cite[(6.2)]{W} for the second one):
\begin{equation}\label{eq:GFz=0}\sum_{\lambda\in\ccsc}q^{|\lambda|}=(-q;q^2)_\infty\quad\mbox{and}\quad 
\sum_{\lambda\in\BGP_t}q^{|\lambda|}=\frac{(-q;q^2)_\infty}{(-q^t;q^{2t})_\infty}.
\end{equation}
\end{proof}

Regarding the extensions by Han--Ji of the Bacher--Bessenrodt--Manivel formula mentioned in the introduction, the second author proved similar results for $\ccsc$ in the $t$ even case in~\cite[Corollary 4.4]{W}. In the case examined here (namely $x=b=1$ and $t$ even), it is again possible to derive this particular extension from our choice of $\rho$ above:
$$
\sum_{\lambda\in \ccsc}q^{|\lambda|}\sum_{h\in\mathcal{H}_t(\lambda)}h^{\beta}=(-q;q^2)_{\infty}\sum_{k\geq 1}\left(tk\right)^{\beta+1}\frac{q^{2kt}}{1-q^{2kt}},
$$
where $\beta$ is a complex number. Indeed, write
$$
\sum_{\lambda\in \ccsc}q^{|\lambda|}\sum_{h\in\mathcal{H}_t(\lambda)}h^{\beta}=\sum_{k\geq1}(tk)^\beta\sum_{\lambda\in \ccsc}q^{|\lambda|}n_{tk}(\lambda),
$$
and use~\eqref{addscrho1} with $t$ replaced by $tk$ to conclude. Note that for $\beta=0$, the above formula yields the following congruence for $b_t^*(n)$, the number of hooks which are multiple of $t$ in all self-conjugate partitions of $n$: 
$$b_t^*(n)\equiv 0\mod t.$$
Note that this congruence is immediate by Theorem~\ref{thm:congSC}, as $b_t^*(n)=\sum_{k\geq1}a_{tk}^*(n)$. It is possible to prove similar results in the odd case, by replacing the set $\ccsc$ by $\BGP_t$.

\medskip

In~\cite{W}, the author also proves multiplication theorems for both $\ccsc$ and $\BGP_t$. We will see how the generating functions for $a_{t}^*(n)$ are immediate consequences of this result.

\begin{Theorem}{\em \cite[Corollary 3.2 with $b=x=1$ and special case of Theorem 6.2]{W}}\label{thm:multsc}
Let $t$ be a positive integer, and let $\rho$ be defined as in Theorem~\ref{thm:addition}. Let $f_t$ be the following formal power series:
$$
f_t(q):=\sum_{\lambda\in\mathcal{P}}q^{|\lambda|}\prod_{h\in\mathcal{H}(\lambda)}\rho^2(th).
$$
If $t$ is even, then we have
\begin{equation*}\label{eq:multSCeven}
\sum_{\lambda\in \ccsc}q^{|\lambda|}\prod_{h\in\mathcal{H}_{t}(\lambda)}\rho(h)=(f_{t}(q^{2t}))^{t/2}(q^{2t};q^{2t})_{\infty}^{t/2}(-q;q^2)_{\infty},
\end{equation*}
and if $t$ is odd,
\begin{equation*}\label{eq:multSCodd}
\sum_{\lambda\in \BGP_t}q^{|\lambda|}\prod_{h\in\mathcal{H}_{t}(\lambda)}\rho(h)=(f_t(q^{2t}))^{(t-1)/2}(q^{2t};q^{2t})_{\infty}^{(t-1)/2}\frac{(-q;q^2)_\infty}{(-q^t;q^{2t})_\infty}.
\end{equation*}
\end{Theorem}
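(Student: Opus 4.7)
The plan is to derive Theorem~\ref{thm:multsc} directly from the Littlewood decomposition for self-conjugate partitions recalled in the introduction, by tracking weights and hook lengths through the bijection. The argument proceeds in parallel for the two parities of $t$: for $t$ even the bijection is $\lambda\in\ccsc \leftrightarrow (\omega,\underline{\tilde\nu}) \in \ccsc_{(t)}\times\ccp^{t/2}$, while for $t$ odd it reads $\lambda\mapsto (\omega,\underline{\tilde\nu},\mu)\in\ccsc_{(t)}\times\ccp^{(t-1)/2}\times\ccsc$, with the restriction to $\BGP_t$ precisely forcing $\mu=\emptyset$ by~\cite[Lemma 6.1]{W}. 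In either case, when $\lambda$ is self-conjugate the $t$ components of the ordinary $t$-quotient group into $\lfloor t/2\rfloor$ pairs of mutually conjugate partitions, together with an extra self-conjugate piece $\mu$ when $t$ is odd.

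Two combinatorial inputs from~\cite{W} drive the computation: (i) the weight splitting $|\lambda|=|\omega|+2t\sum_{i=1}^{\lfloor t/2\rfloor}|\tilde\nu_i|+t|\mu|$ (with $\mu$ absent for even $t$); and (ii) the multi-set identity stating that $\mathcal{H}_t(\lambda)$ consists of exactly two copies of each element of $\bigsqcup_{i=1}^{\lfloor t/2\rfloor}\{\,th : h\in\mathcal{H}(\tilde\nu_i)\,\}$, the doubling reflecting that $\tilde\nu_i$ and its conjugate partner share the same hook-length multi-set. Granting these, the left-hand side in the even case factorizes as
\begin{align*}
\sum_{\lambda\in\ccsc}q^{|\lambda|}\prod_{h\in\mathcal{H}_t(\lambda)}\rho(h)
&=\Bigl(\sum_{\omega\in\ccsc_{(t)}}q^{|\omega|}\Bigr)\prod_{i=1}^{t/2}\sum_{\tilde\nu_i\in\ccp}q^{2t|\tilde\nu_i|}\prod_{h\in\mathcal{H}(\tilde\nu_i)}\rho^2(th)\\
&=\Bigl(\sum_{\omega\in\ccsc_{(t)}}q^{|\omega|}\Bigr)\bigl(f_t(q^{2t})\bigr)^{t/2},
\end{align*}
and the odd case produces the same expression with $t/2$ replaced by $(t-1)/2$, the $\mu=\emptyset$ constraint having eliminated any further contribution.

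To finish, I would specialize $\rho\equiv 1$ in the above identities and compare with the two generating functions recorded in~\eqref{eq:GFz=0}, namely $\sum_{\lambda\in\ccsc}q^{|\lambda|}=(-q;q^2)_\infty$ for even $t$ and $\sum_{\lambda\in\BGP_t}q^{|\lambda|}=(-q;q^2)_\infty/(-q^t;q^{2t})_\infty$ for odd $t$. This pins down the generating function for self-conjugate $t$-cores as $(-q;q^2)_\infty(q^{2t};q^{2t})_\infty^{t/2}$ in the even case and $(-q;q^2)_\infty(q^{2t};q^{2t})_\infty^{(t-1)/2}/(-q^t;q^{2t})_\infty$ in the odd case, and substituting these back yields exactly the stated product formulas. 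The main obstacle is the combinatorial content of property~(ii): one needs to verify, through the abacus/bead description of the Littlewood decomposition, that the self-conjugation symmetry $\lambda=\lambda'$ induces a pairing involution on the $t$-quotient under which mates carry identical hook-length multi-sets. This pairing is precisely what produces the factor of $2$ in the exponent of $\rho$ (via $\rho^2$) and the factor of $2$ in the weight $2t|\tilde\nu_i|$, and it is the technical heart of~\cite[Section 3]{W}.
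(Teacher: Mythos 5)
Your argument is correct and follows essentially the same route as the paper: the paper itself only cites this theorem from~\cite{W}, but its Section~\ref{sec:proofthm} proof of the general Theorem~\ref{thm:addition-multz} (via the Littlewood decomposition, the conjugate pairing of quotient components recorded in Proposition~\ref{prop:bgztLittlewood}, and the resulting $\rho^2(th)$ and $q^{2t|\nu^{(r)}|}$ factors) is exactly the computation you describe, specialized to $z=0$. The only cosmetic difference is that you recover the self-conjugate $t$-core (resp. $\BGP_t$) generating function by bootstrapping with $\rho\equiv1$ against~\eqref{eq:GFz=0}, whereas the paper substitutes the known product formula~\eqref{AKPzcores} directly; both are valid.
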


\begin{Corollary}\label{coro:fgSC}
Let $t$ be a positive integer. If $t$ is even, then we have the generating function 
\begin{equation}\label{eq:gfSCeven}
\sum_{\lambda\in \ccsc}q^{|\lambda|}y^{n_{t}(\lambda)}=((1-y^2)q^{2t};q^{2t})_{\infty}^{t/2}(-q;q^2)_{\infty},
\end{equation}
and if $t$ is odd,
\begin{equation}\label{eq:gftSCodd}
\sum_{\lambda\in \BGP_t}q^{|\lambda|}y^{n_{t}(\lambda)}=((1-y^2)q^{2t};q^{2t})_{\infty}^{(t-1)/2}\frac{(-q;q^2)_\infty}{(-q^t;q^{2t})_\infty}.
\end{equation}
\end{Corollary}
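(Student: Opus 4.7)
The plan is to specialize Theorem~\ref{thm:multsc} to a judiciously chosen weight function $\rho$ so that the multiplicative statistic $\prod_{h\in\mathcal{H}_t(\lambda)}\rho(h)$ degenerates into the hook-length counting statistic $y^{n_t(\lambda)}$. Concretely, I would set
$$
\rho(h):=\begin{cases} y & \text{if } h=t,\\ 1 & \text{otherwise.}\end{cases}
$$
Since $\mathcal{H}_t(\lambda)$ consists of hook lengths that are multiples of $t$, among these only the hooks of length exactly $t$ contribute a non-trivial factor, so that $\prod_{h\in\mathcal{H}_t(\lambda)}\rho(h)=y^{n_t(\lambda)}$ by definition of $n_t(\lambda)$.

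Next I would compute the auxiliary series $f_t$. For a hook $h\in\mathcal{H}(\lambda)$, one has $\rho(th)=y$ exactly when $h=1$, and $\rho(th)=1$ otherwise, hence $\prod_{h\in\mathcal{H}(\lambda)}\rho^2(th)=y^{2n_1(\lambda)}$. Applying identity~\eqref{gfn1} with $y$ replaced by $y^2$ yields
$$
f_t(q)=\sum_{\lambda\in\ccp}q^{|\lambda|}y^{2n_1(\lambda)}=\frac{((1-y^2)q;q)_\infty}{(q;q)_\infty},
$$
so that after the substitution $q\mapsto q^{2t}$,
$$
f_t(q^{2t})(q^{2t};q^{2t})_\infty=((1-y^2)q^{2t};q^{2t})_\infty.
$$

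Plugging these into Theorem~\ref{thm:multsc} gives the result directly: in the even case the $t/2$-th power of $f_t(q^{2t})$ combines with $(q^{2t};q^{2t})_\infty^{t/2}$ to produce $((1-y^2)q^{2t};q^{2t})_\infty^{t/2}$, yielding~\eqref{eq:gfSCeven}, and in the odd case the same cancellation with exponent $(t-1)/2$ yields~\eqref{eq:gftSCodd} after retaining the factor $(-q;q^2)_\infty/(-q^t;q^{2t})_\infty$.

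There is no real obstacle here: the proof is a one-line specialization once the correct $\rho$ is identified, together with the elementary closed form~\eqref{gfn1} for the generating function weighted by $n_1$. The only mildly non-trivial point is noticing that the exponent $2$ on $\rho$ inside $f_t$ harmlessly gets absorbed by rescaling $y\mapsto y^2$ in~\eqref{gfn1}, which is why $(1-y^2)$ (rather than $(1-y)$) appears in the final statement.
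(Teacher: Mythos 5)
Your proof is correct and follows essentially the same route as the paper: the authors likewise take $\rho(h)=y$ if $h=t$ and $1$ otherwise, obtain $f_t(q)=((1-y^2)q;q)_\infty/(q;q)_\infty$ from~\eqref{gfn1}, and conclude immediately from Theorem~\ref{thm:multsc}. Your additional remarks on why $\prod_{h\in\mathcal{H}_t(\lambda)}\rho(h)=y^{n_t(\lambda)}$ and on the cancellation $f_t(q^{2t})(q^{2t};q^{2t})_\infty=((1-y^2)q^{2t};q^{2t})_\infty$ are accurate and merely spell out what the paper leaves implicit.
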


\begin{proof}
Consider $\rho(h)=y$ if $h=t$ and $1$ otherwise, then by definition
\begin{equation}\label{eq:frhoy}
f_t(q)=\sum_{\lambda\in\mathcal{P}}q^{|\lambda|}y^{2n_1(\lambda)}=\frac{((1-y^2)q;q)_\infty}{(q;q)_\infty},
\end{equation}
where the second equality follows from~\eqref{gfn1}. The results are then immediate by Theorem~\ref{thm:multsc}.
\end{proof}

The generating function~\eqref{eq:gfSCeven} was established in~\cite[Theorem 1.1 (1)]{AAOS}. A more complicated expression is found in the same paper for the generating function with $t$ odd, namely:
$$\sum_{\lambda\in \ccsc}q^{|\lambda|}y^{n_{t}(\lambda)}=((1-y^2)q^{2t};q^{2t})_{\infty}^{(t-1)/2}\frac{(-q;q^2)_\infty}{(-q^t;q^{2t})_\infty}\sum_{\lambda\in\ccsc}q^{|\lambda|}y^{n_1(\lambda)}.$$
This formula is also proved by using the Littlewood decomposition.  Nevertheless we find more appropriate to exhibit~\eqref{eq:gftSCodd} instead, which shows that the correct point of view towards finding factorized generating functions  (resp. congruences) is the multiplication (resp. addition) theorem provided by the Littlewood decomposition restricted to $\BGP_t$: the complicated generating function $\sum_{\lambda\in\ccsc}q^{|\lambda|}y^{n_1(\lambda)}$, expressed in~\cite[Theorem~3.1]{AAOS} is not any more needed, and one can get congruences as in Theorem~\ref{thm:congSC} whereas it is not the case for $\ccsc$ with $t$ odd (see Remark~\ref{rk:unique}).

\section{Combinatorial properties of the Littlewood decomposition on $z$-asymmetric partitions} \label{sec:lit}

In this section, we use the formalism of Han and Ji in \cite{HJ}. Recall that a partition $\mu$ is a $t$-core if it has no hook that is a multiple of $t$. For any $A\subset\ccp$, we denote by $A_{(t)}$ the subset of elements of $A$ that are $t$-cores. For example, the only $2$-cores are the ``staircase" partitions $(k,k-1,\dots,1)$, for any positive integer $k$, which are also the only $\ccsc$ $2$-cores.

Let $\partial \lambda$ be the border of the Ferrers diagram of $\lambda$. Each step on $\partial\lambda$ is either horizontal or vertical. Encode the walk along the border from the South-West to the North-East as depicted in Figure \ref{fig:word}: take ``$0$" for a vertical step and ``$1$" for a horizontal step. This yields a $0/1$ sequence denoted \textbf{$s(\lambda)$}. The resulting word $s(\lambda)$ over the $\lbrace 0,1\rbrace$ alphabet:
\begin{itemize}
\item  contains infinitely many ``$0$"'s (respectively ``$1$"'s) at the beginning (respectively the end),
\item is indexed by $\bbbz$,
\item  and is written $(c_i)_{i\in\mathbb{Z}}$.  
\end{itemize}
This writing as a sequence is not unique since for any $k$, sequences $(c_{k+i})_{i\in\mathbb{Z}}$ encode the same partition. Hence it is necessary for that encoding to be bijective to set the index $0$ uniquely. To tackle that issue, we set the index $0$ when the number of ``$0$"'s on and to the right of that index is equal to the number of ``$1$"'s to the left. In other words, the number of horizontal steps along $\partial\lambda$ corresponding to a ``$1$" of negative index in $(c_i)_{i\in\bbbz}$ must be equal to the number of vertical steps corresponding to ``$0$"'s of non-negative index in $(c_i)_{i\in\bbbz}$ along $\partial\lambda$. The delimitation between the letter of index $-1$ and the one of index $0$ is called the median of the word, marked by a $\mid$ symbol. The size of the Durfee square is then equal to the number of ``$1$"'s of negative index (equivalently the number of ``$0$"'s of positive index). Hence a partition is bijectively associated by the application $s$ to the word:
\begin{align*}
s(\lambda)=(c_i)_{i\in\mathbb{Z}}=\left(\ldots c_{-2}c_{-1}|c_0c_1c_2\ldots\right), \intertext{where $c_i\in\lbrace 0,1\rbrace$ for any $i\in\bbbz$, and such that}
\#\{i\leq-1,c_i=1\}= \#\{i\geq0,c_i=0\}.
\end{align*}

Moreover, this application maps bijectively a box $u$ of hook length $h_u$ of the Ferrers diagram of $\lambda$ to a pair of indices $(i_u,j_u)\in\bbbz^2$ of the word $s(\lambda)$ such that
\begin{equation*}\label{conditionshookmot}
i_u<j_u,\quad c_{i_u}=1,\quad  c_{j_u}=0,\quad j_u-i_u=h_u.
\end{equation*} 
Recall for instance from~\cite[Definition~6.1]{LW} that for all $i\in\mathbb{Z}$, we have
\begin{equation*}
c_i=\left\{\begin{matrix}
0&\mbox{if}&i\in\{\lambda_j-j,\,j\in\mathbb{N}\},\\
1&\mbox{if}&i\in\{j-\lambda'_j-1,\,j\in\mathbb{N}\}.
\end{matrix}\right.
\end{equation*} 

Thanks to this, it is possible to connect $s(\lambda)$ with the Frobenius notation~\eqref{Frob} of $\lambda$, as also done using abacus in \cite{BN}:
\begin{equation}\label{conditionshookmotFrob}
\lbrace i \in\mathbb{N}, c_i=0\rbrace=\lbrace a_j, j\in\lbrace 1,\dots,d\rbrace \rbrace\text{ and }
\lbrace -i \in\mathbb{N}^*, c_{i}=1\rbrace=\lbrace -b_j-1, j\in\lbrace 1,\dots,d\rbrace \rbrace.
\end{equation}

\begin{figure}[h!]
\centering
\begin{tikzpicture}
    [
        dot/.style={circle,draw=black, fill,inner sep=1pt},
    ]

\foreach \x in {0,...,2}{
    \node[dot] at (\x,-4){ };
}

\foreach \x in {.2,1.2}
    \draw[->,thick,orange] (\x,-4) -- (\x+.6,-4);
\foreach \x in {3.2,4.2}
    \draw[->,thick,orange] (\x,-2) -- (\x+.6,-2);
\foreach \y in {1.8,.8}
    \draw[->,thick,orange] (5,-\y) -- (5,-\y+.6);
\draw[->,thick,orange] (2,-3.8) -- (2,-3.8+.6);
\draw[->,thick,orange] (2,-2.8) -- (2,-2.8+.6);
\draw[->,thick,orange] (2.2,-2) -- (2.2+.6,-2);

\node[dot] at (2,-3){};
\foreach \x in {3,...,5}
    \node[dot] at (\x,-2){};
\foreach \x in {1,...,4}
    \draw (\x,-.1) -- node[above,xshift=-0.4cm,yshift=1mm] {$\lambda_\x'$} (\x,+.1);

\node[above,xshift=0.5cm,yshift=1mm] at (4,0) {$\lambda_5'$};
\node[above,xshift=0.5cm,yshift=1mm] at (5,0) {NE};
\node[above,xshift=-4mm,yshift=1mm] at (0,0) {NW};

\foreach \y in {1,...,3}
    \draw (.1,-\y) -- node[above,xshift=-4mm,yshift=0.2cm] {$\lambda_\y$} (-.1,-\y);
\node[above,xshift=-4mm,yshift=2mm] at (0,-4) {$\lambda_4$};
\node[above,xshift=-4mm] at (0,-5) {SW};
\node at (0,-4.5) {0};  
\node at (0,-5.5) {0};
\node at (2,-3.5) {0};
\node at (2,-2.5) {0};
\node at (5,-1.5) {0};
\node at (5,-0.5) {0};

\node at (0.5,-4) {1};
\node at (1.5,-4) {1};   
\node at (4.5,-2) {1};
\node at (3.5,-2) {1};   
\node at (5.5,-0) {1};
\node at (2.5,-2) {1};

\node[dot] at (5,-1){};  
\node[dot] at (5,0){};
\draw[->,thick,-latex] (0,-6) -- (0,-5);
\draw[thick] (0,-6) -- (0,1);
\draw[->,thick,-latex] (-1,0) -- (6,0);
\node[circle,draw=blue,fill=blue,inner sep=0pt,minimum size=5pt] at (2,-2){};

\end{tikzpicture}
\caption{For $\lambda=(5,5,2,2)$, $s(\lambda)=\dots01100\mid 111001\dots$.}
\label{fig:word}
\end{figure}
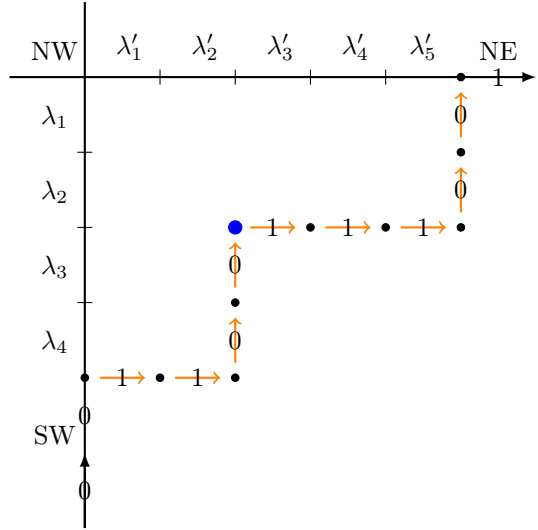
Now we recall the following classical map, often called the Littlewood decomposition (see for instance \cite{GKS,HJ}). Let $t \geq 2$ be an integer and consider:\\
$$\begin{array}{l|rcl}
\Phi_t: & \mathcal{P} & \to & \mathcal{P}_{(t)} \times \mathcal{P}^t \\
& \lambda & \mapsto & (\omega,\nu^{(0)},\ldots,\nu^{(t-1)}),
\end{array}$$
where if we set $s(\lambda)=\left(c_i\right)_{i\in\bbbz}$, then for all $k\in\lbrace 0,\dots,t-1\rbrace$, one has $\nu^{(k)}:=s^{-1}\left(\left(c_{ti+k}\right)_{i\in\bbbz}\right)$. The tuple $\underline{\nu}=\left(\nu^{(0)},\ldots,\nu^{(t-1)}\right)$ is called the $t$-quotient of $\lambda$ and is denoted by \textit{$quot_t(\lambda)$}, while $\omega$ is the $t$-core of $\lambda$ denoted by \textit{$core_t(\lambda)$}. Obtaining the $t$-quotient is straightforward from $s(\lambda)=\left(c_i\right)_{i\in\bbbz}$: we just look at subwords with indices congruent to the same values modulo $t$. The sequence $10$ within these subwords are replaced iteratively by $01$ until the subwords are all the infinite sequence of ``$0$"'s before the infinite sequence of ``$1$"'s (in fact it consists in removing all rim hooks in $\lambda$ of length congruent to $0\pmod t$). Then $\omega$ is the partition corresponding to the word which has the subwords$\pmod t$ obtained after the removal of the $10$ sequences.

For example, if we take $\lambda = (5,5,2,2)$ as in Figure~\ref{fig:word} and $t=3$, then $s(\lambda)=\ldots\color{green}{0}\color{red}{0}\color{blue}{0}\color{green}{1}\color{red}{1}\color{blue}{0}\color{green}{0} \color{black}| \color{red}{1}\color{blue}{1}\color{green}{1}\color{red}{0}\color{blue}{0}\color{green}{1}\color{red}{1}\color{blue}{1}\ldots $
\begin{align*}
\begin{array}{rc|rcl}
s\left(\nu^{(0)}\right)=\ldots \color{red} 001 \color{black}| \color{red}101\color{black}\ldots& &s\left(w_{0}\right)=\ldots \color{red} 000 \color{black}| \color{red}111\color{black}\ldots,\\
 s\left(\nu^{(1)}\right)=\ldots \color{blue} 000 \color{black}| \color{blue}101\color{black}\ldots& \longmapsto& s\left(w_{1}\right)=\ldots \color{blue} 000 \color{black}| \color{blue}011\color{black}\ldots , \\
 s\left(\nu^{(2)}\right)=\ldots \color{green} 010 \color{black}| \color{green}111\color{black}\ldots & & s\left(w_{2}\right)=\ldots \color{green} 001 \color{black}| \color{green}111\color{black}\ldots .
\end{array}\\
\end{align*}
Thus
$$s(\omega)=\ldots \color{red}{0} \color{blue}{0} \color{green}{0} \color{red}{0} \color{blue}{0}\color{green}{1} \color{black}| \color{red}{1} \color{blue}{0} \color{green}{1} \color{red}{1} \color{blue}{1} \color{green}{1}\color{black}\ldots\quad\mbox{so}\quad core_3(\lambda)= \omega=(2),$$

and
$$
quot_3(\lambda)=\left(\nu^{(0)},\nu^{(1)},\nu^{(2)}\right)=\left((2),(1),(1)\right).
$$

Note that $\lambda=(5,5,2,2)$ in the above example is in $\mathcal{DD}=\ccp_1$, and, using Definition~\ref{def:BGzt} or~\eqref{BG1t}, neither belongs to $\BGP_{1,3}$ nor $\BGP_{1,4}$  but we have $\lambda\in\BGP_{1,5}$.

The following properties of the Littlewood decomposition are given in \cite{HJ}.

\begin{Proposition}\cite[Theorem 2.1]{HJ}\label{prop:hanlitt} Let $t$ be a positive integer. The Littlewood decomposition $\Phi_t$ maps bijectively a partition $\lambda$ to $\left(\omega,\nu^{(0)},\dots,\nu^{(t-1)}\right)$ such that:
\begin{align*}
&(P1)\quad \omega \text{ is a $t$-core and }\nu^{(0)},\dots,\nu^{(t-1)} \text{are partitions},\\
&(P2) \quad |\lambda|=|\omega|+t\sum_{i=0}^{t-1} |\nu^{(i)}|,\\
&(P3)\quad \mathcal{H}_t(\lambda)=t\mathcal{H}(\underline{\nu}),
\end{align*}
where, for a multiset $S$, $tS:=\{ts,s\in S\}$ and $\mathcal{H}(\underline{\nu}):=\bigcup\limits_{i=0}^{t-1}\cch(\nu^{(i)})$.
\end{Proposition}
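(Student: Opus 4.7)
The plan is to work entirely with the $0/1$ word encoding $s(\lambda)=(c_i)_{i\in\bbbz}$ introduced just above the statement. Splitting a bi-infinite sequence into $t$ residue-class subwords is trivially a bijection on bi-infinite sequences, so what must be checked is that (a) each subword itself encodes a partition, (b) one can sort each subword to obtain a $t$-core $\omega$, and (c) the combinatorial data ($|\lambda|$, $\mathcal H_t(\lambda)$) decomposes correctly across subwords.

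\textbf{Step 1 (bijectivity).} Because $s(\lambda)$ has cofinitely many $0$'s in its left tail and cofinitely many $1$'s in its right tail, each subword $w^{(k)}:=(c_{ti+k})_{i\in\bbbz}$ has the same tail behavior, so after choosing a balanced median each $w^{(k)}$ is the encoding $s(\nu^{(k)})$ of a partition. Define $\omega$ by re-interleaving the $t$ subwords \emph{after sorting} each of them (all $0$'s before all $1$'s) while preserving its $0/1$-counts and its median position. The inverse map simply reinjects into $s(\omega)$, within each residue class modulo $t$, the $10$-inversions recorded by $\nu^{(k)}$. Mutual inversion of these two procedures is a direct check; the delicate point is that the medians of the $w^{(k)}$ must be chosen so that the global balance condition of $s(\omega)$ is preserved (the $t$ local $0/1$-imbalances must sum to zero, which follows from the balance of $s(\lambda)$). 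This is where I expect the main bookkeeping obstacle.

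\textbf{Step 2 (property (P1)).} A hook of length $h$ in a partition $\mu$ corresponds to a pair $i<j$ with $c_i=1$, $c_j=0$ and $j-i=h$ in $s(\mu)$. Such a hook has length divisible by $t$ iff $i\equiv j\pmod t$, i.e. iff both indices lie in the same subword. Since each subword of $s(\omega)$ has all its $0$'s before all its $1$'s by construction, no such pair exists in $s(\omega)$, so $\omega$ has no hook of length divisible by $t$, i.e. $\omega$ is a $t$-core. Partition-hood of each $\nu^{(k)}$ was already established in Step 1.

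\textbf{Step 3 (properties (P3) and (P2)).} For (P3), the observation of Step 2 shows that hooks of $\lambda$ of length divisible by $t$ in residue class $k$ are in bijection with pairs $i<j$ in $w^{(k)}$ with $c_i=1$, $c_j=0$; under the rescaling $i\mapsto(i-k)/t$ these are exactly the hooks of $\nu^{(k)}$, and the length $j-i$ rescales to $(j-i)/t$. This gives $\mathcal H_t(\lambda)=t\,\mathcal H(\underline{\nu})$. For (P2), note that performing a single $10\to 01$ swap inside some $w^{(k)}$ amounts to removing a rim hook of length $t$ from the underlying partition, hence decreases its size by $t$; iterating the swaps until all subwords are sorted transforms $\lambda$ into $\omega$ and performs $\sum_k|\nu^{(k)}|$ swaps in total (one per box of the quotient), which yields $|\lambda|=|\omega|+t\sum_{k=0}^{t-1}|\nu^{(k)}|$.
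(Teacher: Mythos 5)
Your argument is correct and follows exactly the $0/1$-word mechanism that the paper sets up in Section~3 (boxes $\leftrightarrow$ pairs $i<j$ with $c_i=1$, $c_j=0$, $j-i=h_u$; residue-class subwords; sorting via $10\to01$ swaps, each of which removes a rim hook of length $t$); the paper itself does not reprove the proposition but cites it from Han--Ji, whose proof is this same standard one. The only point you flag as delicate --- preservation of the global balance when each subword is sorted in place --- does go through, since permuting the letters of a subword among its own positions leaves invariant the difference between its number of $1$'s at negative indices and its number of $0$'s at non-negative indices.
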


Note that for any partition $\lambda$, if $s(\lambda)=(c_i)_{i\in\mathbb{Z}}$ and $s(\lambda')=(c'_i)_{i\in\mathbb{Z}}$, then $c'_i=1-c_{-i-1}$ for all $i\in\mathbb{Z}$ (see for instance~\cite[Remark 6.3]{LW}). Therefore using Remark~\ref{rk:pzferrers}, we have the characterizations
\begin{equation}\label{caractmotsPzet-z}\lambda\in\ccp_z\Leftrightarrow\left\{\begin{matrix}
c_0=c_1=\dots=c_{z-1}=1\\
c_i=1-c_{z-i-1}\quad\forall i\geq z
\end{matrix}\right.\quad\mbox{and}\quad\lambda\in\ccp_{-z}\Leftrightarrow\left\{\begin{matrix}
c_{-1}=c_{-2}=\dots=c_{-z}=0\\
c_i=1-c_{-z-i-1}\quad\forall i\leq -z-1.
\end{matrix}\right.
\end{equation}

Now following~\cite{A}, we describe the image $\Phi_t(\ccp_z)$, where $0\leq z\leq t-1$ are fixed integers. As recalled in~\cite[(2.2)]{A}, there is in~\cite{GKS} a bijective correspondence $\kappa_t$ between $t$-cores $\omega$ of partitions and vectors of integers $\mathbf{n}\in\mathbb{Z}^t$ such that $n_0+n_1+\dots+n_{t-1}=0$. Using the above description of $\omega$ in terms of sequences of $0$'s and $1$'s, namely $s(\omega)=(c_i)_{i\in\mathbb{Z}}$ , it is possible to define $\kappa_t(\omega)=\mathbf{n}$ by
$$n_i:=\min\{k\in\mathbb{Z}\,|\,c_{i+kt}=1\},\quad 0\leq i\leq t-1.$$
In other words, $n_i$ is the index of the first $1$ appearing in the sub-word of $s(\omega)$ corresponding to indices congruent to $i$ modulo $t$. Following~\cite{A}, set
$$\mathcal{C}_{z;t}:=\{\mathbf{n}\in\mathbb{Z}^t\,|\,n_r+n_{z-r-1}=0\;\mbox{for}\;0\leq r\leq z-1,\;\mbox{and}\;n_r+n_{t+z-r-1}=0\;\mbox{for}\;z\leq r\leq t-1\},$$
and for a partition $\lambda$, define $d_c(\lambda)$ as the size of the Durfee square of the partition obtained from $\lambda$ by removing its first $c$ parts.

\begin{Theorem}\label{thm:albion}{\em \cite[Theorem 2]{A}}
Let $t\geq2$ and $0\leq z\leq t-1$ be integers, and $\lambda\in\ccp_z$. Setting $\Phi_t(\lambda)=(\omega,\nu^{(0)},\dots,\nu^{(t-1)})$, we have $\kappa_t(\omega)\in\mathcal{C}_{z;t}$ and for $0\leq r\leq z-1$, if $n_r\geq0$, then there exists a partition $\mu^{(r)}$ such that
\begin{equation}\label{eq:albion1}
\nu^{(r)}=\mu^{(r)}+(1^{n_r+d_{n_r}(\mu^{(r)})})\quad\mbox{and}\quad\nu^{(z-r-1)}=(\mu^{(r)})'+(1^{d_{n_r}(\mu^{(r)})}),
\end{equation}
while for $z\leq r\leq t-1$, we have
\begin{equation}\label{eq:albion2}
\nu^{(r)}=(\nu^{(t+z-r-1)})'.
\end{equation}
\end{Theorem}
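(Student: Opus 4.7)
The plan is to work entirely at the level of the $0/1$-sequence encoding $s(\lambda)=(c_i)_{i\in\mathbb{Z}}$ of a partition. First, I would translate the hypothesis $\lambda\in\ccp_z$ into its symmetry form using \eqref{caractmotsPzet-z}: $c_0=c_1=\cdots=c_{z-1}=1$, and $c_i=1-c_{z-i-1}$ for every $i\geq z$. The Littlewood decomposition $\Phi_t$ is defined precisely by splitting $s(\lambda)$ into the $t$ sub-words $(c_{kt+r})_{k\in\mathbb{Z}}$ indexed by $r\in\{0,\dots,t-1\}$, and the quotient components $\nu^{(r)}$ and the integers $n_r=\kappa_t(\omega)_r$ are both read off from those sub-words (the former after pushing the $1$'s as far left as possible).

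Next, I would analyse how the involution $i\mapsto z-i-1$ acts on residues modulo $t$. It exchanges residue $r\in\{0,\dots,z-1\}$ with residue $z-r-1\in\{0,\dots,z-1\}$, and exchanges residue $r\in\{z,\dots,t-1\}$ with residue $t+z-r-1\in\{z,\dots,t-1\}$, shifting indices by a controlled amount in each case. Combined with the bit-flip $c\mapsto 1-c$, the condition $c_i=1-c_{z-i-1}$ thus forces each paired sub-word to be (up to reindexing) the bitwise complement of the reverse of its partner; by the relation $c'_i=1-c_{-i-1}$ recorded after Proposition~\ref{prop:hanlitt}, this is exactly the encoding of conjugation of partitions. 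Applying the symmetry to the $t$-core sub-words yields at once $n_r+n_{z-r-1}=0$ for $0\leq r\leq z-1$ and $n_r+n_{t+z-r-1}=0$ for $z\leq r\leq t-1$, i.e.\ $\kappa_t(\omega)\in\mathcal{C}_{z;t}$.

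For the range $z\leq r\leq t-1$, neither $r$ nor $t+z-r-1$ receives any of the forced $1$'s $c_0,\dots,c_{z-1}$, so the pairing is a clean conjugation and gives $\nu^{(r)}=(\nu^{(t+z-r-1)})'$ after checking that the median-alignment shifts match. This establishes \eqref{eq:albion2}.

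The main obstacle is the range $0\leq r\leq z-1$, where the extra forced letter $c_r=c_{z-r-1}=1$ sits inside the two paired sub-words and destroys the clean reverse-complement symmetry; this is what produces the corrective columns $(1^{n_r+d_{n_r}(\mu^{(r)})})$ and $(1^{d_{n_r}(\mu^{(r)})})$ appearing in \eqref{eq:albion1}. I would proceed by excising exactly these forced $1$'s from each sub-word and showing, using the definition of $n_r$ as the position of the first $1$ in sub-word $r$, that at the level of Ferrers diagrams this amounts to removing a full first column of height $n_r+d_{n_r}(\mu^{(r)})$ from $\nu^{(r)}$ and a first column of height $d_{n_r}(\mu^{(r)})$ from $\nu^{(z-r-1)}$; here $d_{n_r}$ is the Durfee parameter introduced before the theorem, which tracks how far the symmetry can reach before running into the remaining forced letters. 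Defining $\mu^{(r)}$ as the residual shape obtained from $\nu^{(r)}$ after this stripping, the now-clean reverse-complement symmetry between the stripped sub-words forces $\nu^{(z-r-1)}$ to coincide, after its own stripping, with $(\mu^{(r)})'$, giving both equalities in \eqref{eq:albion1} and completing the proof.
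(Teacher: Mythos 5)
First, a remark on scope: the paper does not prove this statement at all --- it is quoted verbatim as Albion's result \cite[Theorem 2]{A} --- so there is no internal proof to compare against. Judged on its own merits, your strategy (encode $\lambda$ by the word $(c_i)_{i\in\mathbb{Z}}$, use the symmetry in \eqref{caractmotsPzet-z}, track how the involution $i\mapsto z-i-1$ permutes residues modulo $t$, and identify reverse--complement of sub-words with conjugation via $c'_i=1-c_{-i-1}$) is sound and is essentially the same beta-set/abacus computation that underlies Albion's argument. Your treatment of the range $z\leq r\leq t-1$ and of the core condition $\kappa_t(\omega)\in\mathcal{C}_{z;t}$ is convincing: the quotient partition read from a sub-word is shift-invariant, so \eqref{eq:albion2} follows once the residue pairing is in place, and the charge relations $n_r+n_{z-r-1}=0$, $n_r+n_{t+z-r-1}=0$ are routine consequences of the same symmetry.

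The genuine gap is in your last paragraph, which is precisely where the content of \eqref{eq:albion1} lives. You correctly locate the obstruction (the forced letters $c_r=c_{z-r-1}=1$ break the reverse--complement pairing at exactly one position of each of the two sub-words), but you then \emph{assert}, rather than prove, that excising these letters removes a first column of height $n_r+d_{n_r}(\mu^{(r)})$ from $\nu^{(r)}$ and one of height $d_{n_r}(\mu^{(r)})$ from $\nu^{(z-r-1)}$. This is the whole theorem: one must actually compute the number of $0$'s lying to the right of the forced $1$ in each sub-word \emph{after} the quotient normalisation, and show that this count is governed by the Durfee square of the partition obtained from $\mu^{(r)}$ by deleting its first $n_r$ parts --- it is not explained why the Durfee parameter $d_{n_r}$ should appear at all, nor where the extra $n_r$ in the first height comes from. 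There is also a well-definedness issue you do not address: you define $\mu^{(r)}$ as the residue of the stripping, but the heights of the stripped columns are expressed in terms of $\mu^{(r)}$ itself, so one must verify that the two descriptions are consistent. Finally, you never use the hypothesis $n_r\geq 0$ (which selects which member of the pair $\{r,z-r-1\}$ carries the formula), and the self-paired case $2r=z-1$ (where $n_r=0$ and $\nu^{(r)}$ must simultaneously satisfy both equalities in \eqref{eq:albion1}) is not discussed. As it stands the proposal is a plausible roadmap, not a proof, of \eqref{eq:albion1}.
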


As for the special case $z=0$ corresponding to self-conjugate partitions (when $t$ is odd), we need to restrict the above conditions~\eqref{eq:albion1} and~\eqref{eq:albion2} to be able to derive addition-multiplication theorems, and congruences. The following result makes the connection with our sets $\BGP_{z,t}$ defined in the introduction.
\begin{Proposition}\label{prop:bgzt}
Let $t\geq2$ and $0\leq z\leq t-1$ be integers. A partition $\lambda$ belongs to the set $\BGP_{z,t}$ from Definition~\ref{def:BGzt} if and only if setting $\Phi_t(\lambda)=(\omega,\nu^{(0)},\dots,\nu^{(t-1)})$, we have $\kappa_t(\omega)\in\mathcal{C}_{z;t}$, $\nu^{(r)}=\emptyset$ for $0\leq r\leq z-1$, and if $t+z-1$ is even, then $\nu^{((t+z-1)/2)}=\emptyset$.
\end{Proposition}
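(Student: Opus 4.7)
The plan is to work entirely at the level of the bi-infinite word $s(\lambda)=(c_i)_{i\in\mathbb{Z}}$ and its $t$ residue sub-words $(c_{ti+r})_{i\in\mathbb{Z}}$, $0\le r\le t-1$: as recalled in the text, $\nu^{(r)}=\emptyset$ is equivalent to the sub-word at residue $r$ having no $10$ inversion (the iterative replacement $10\to 01$ then has nothing to swap). Since $\BGP_{z,t}\subseteq\ccp_z$, Theorem~\ref{thm:albion} lets me assume $\lambda\in\ccp_z$ on both sides of the claimed equivalence, so that $\kappa_t(\omega)\in\mathcal{C}_{z;t}$ (together with \eqref{eq:albion1}--\eqref{eq:albion2}) is granted, and the task reduces to matching the two conditions of Definition~\ref{def:BGzt} with the stated emptiness assertions. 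The dictionary I shall use is~\eqref{conditionshookmotFrob} applied to $\lambda\in\ccp_z$: for $i\ge0$, $c_i=0$ iff $i\in\{a_j+z\}$; for $i\le-1$, $c_i=1$ iff $-i-1\in\{a_j\}$; together with the $\ccp_z$-symmetry $c_i=1-c_{z-i-1}$ (valid for $i\ge z$, equivalently for $i\le-1$) from~\eqref{caractmotsPzet-z}.

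For $r\in\{0,\ldots,z-1\}$ we have $c_r=1$ automatically, so the sub-word at residue $r$ admits no $10$ inversion involving the non-negative sub-indices precisely when $c_{ti+r}=1$ for every $i\ge 1$, i.e.\ $a_j+z\ne ti+r$ for all $j$ and $i\ge 1$, equivalently $(a_j+(z-r))/t\notin\mathbb{N}$. The symmetry identifies sub-index $i$ of residue $r$ with sub-index $-i$ of residue $z-r-1$ while swapping $0$ and $1$, so the same condition applied at residue $z-r-1$ forces every negative-sub-index entry of residue $r$ to vanish, which rules out every remaining $10$ inversion. As $r$ runs over $\{0,\ldots,z-1\}$, the integer $k:=z-r$ sweeps out $\{1,\ldots,z\}$, so the simultaneous emptiness $\nu^{(0)}=\cdots=\nu^{(z-1)}=\emptyset$ is equivalent to $(a_j+k)/t\notin\mathbb{N}$ for all $j$ and $1\le k\le z$, which is the first condition of Definition~\ref{def:BGzt}.

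For the middle residue, assume $t+z-1$ is even and set $r:=(t+z-1)/2$; then $z\le r\le t-1$ since $t\ge z+1$, and a direct computation shows that the $\ccp_z$-symmetry maps the position $ti+r$ to $t(-i-1)+r$, so that the sub-word at residue $r$ is self-symmetric under $i\mapsto -i-1$ combined with $0\leftrightarrow1$. Among words with no $10$ inversion, the unique self-symmetric one is $\ldots000\,|\,111\ldots$, so $\nu^{(r)}=\emptyset$ translates into $c_{ti+r}=1$ for every $i\ge 0$, i.e.\ $a_j\ne ti+(t-z-1)/2$; multiplying by $2$ this becomes $2a_j+z+1\ne t(2i+1)$ for all $j$ and $i\ge 0$, i.e.\ $(2a_j+z+1)/t\notin 2\mathbb{N}+1$, which is the second condition of Definition~\ref{def:BGzt}. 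When instead $t+z-1$ is odd, $t$ and $z$ share their parity and a one-line parity check shows that $2a_j+z+1$ and $t(2m+1)$ always have opposite parities (odd vs.\ even if both are even; even vs.\ odd if both are odd), so the second condition is vacuous, consistent with the proposition imposing no middle constraint in this case. The main subtlety will not be the computations but the bookkeeping between the three parallel descriptions of $\lambda$ (Frobenius coordinates, the word $s(\lambda)$, and its residue sub-words), together with the twofold use of the $\ccp_z$-symmetry: first to pair the residues $r$ and $z-r-1$ for $0\le r\le z-1$, then to collapse the self-paired middle residue $(t+z-1)/2$ to the single condition on diagonal hook lengths.
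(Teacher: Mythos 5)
Your proposal is correct and follows essentially the same route as the paper's proof: translate the emptiness of the quotient components into the absence of $10$ inversions in the residue sub-words of $s(\lambda)$, use~\eqref{conditionshookmotFrob} and the Frobenius form~\eqref{Frobenius} to turn ``all non-negative sub-indices equal $1$'' into the divisibility conditions of Definition~\ref{def:BGzt}, and dispose of the middle residue by the same parity computation. Your treatment of the negative sub-indices (pairing residue $r$ with $z-r-1$ via the $\ccp_z$-symmetry, and the observation that the only self-symmetric inversion-free word is $\ldots000\,|\,111\ldots$) supplies a detail the paper's proof leaves implicit, but does not change the argument.
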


\begin{proof}
A partition is empty if and only if its corresponding sequence $(c_i)_{i\in\mathbb{Z}}$ is made of infinitely many $0$'s followed by infinitely many $1$'s. Therefore, using the Frobenius notation~\eqref{Frobenius} for any partition $\lambda\in\ccp_z$, the first case of~\eqref{conditionshookmotFrob} ensures that $\nu^{(r)}=\emptyset$ for all $0\leq r\leq z-1$ in $\Phi_t(\lambda)$ if and only if $ti+r\notin\{a_1+z,\dots,a_d+z\}$ for all $0\leq r\leq z-1$ and $i\geq0$. Setting $k=z-r$, this is equivalent to $(a_j+k)/t\notin\mathbb{N}$ for all $1\leq j\leq d$ and $1\leq k\leq z$.

Similarly, when $t+z-1$ is even, $\nu^{((t+z-1)/2)}=\emptyset$ if and only if $ti+(t+z-1)/2\notin\{a_1+z,\dots,a_d+z\}$ for all $i\geq0$, which is equivalent to $t\nmid(a_j+z-(t+z-1)/2)$ for all $1\leq j\leq d$. Meanwhile, for a $j_0\in\{1,\dots,d\}$, we have 
\begin{eqnarray*}
t\mid\left(a_{j_0}+z-\frac{t+z-1}{2}\right)&\Leftrightarrow &\exists\alpha_0\in\mathbb{N},\,2a_{j_0}+2z-t-z+1=2\alpha_0t\\
&\Leftrightarrow &\frac{2a_{j_0}+z+1}{t}\in 2\mathbb{N}+1.
\end{eqnarray*}

When $t+z-1$ is odd, the integers $z$ and $t$ have the same parity. If we had $2a_{j_0}+z+1=(2\alpha_0+1)t$ for integers $\alpha_0\geq0$ and $1\leq j_0\leq d$, then $z$ and $t$ would have different parity, which is a contradiction. Therefore the second condition in Definition~\ref{def:BGzt} is always satisfied when $t+z-1$ is odd, and we get the conclusion. 

\end{proof}

Note that thanks to Proposition~\ref{prop:bgzt}, the $t$-quotients of partitions in $\Phi_t(\BGP_{z,t})$ are such that in~\eqref{eq:albion1} only empty partitions appear, and there is at least one more empty partition in~\eqref{eq:albion2} if $t+z-1$ is even. 

\begin{Remark}\label{rk:corebgzt}
As proved by Albion in~\cite[Corollary 3]{A}, a $t$-core is in $\ccp_z$ if and only if $n_r=0$ for all $0\leq r\leq z-1$. As a consequence, the $t$-core of any partition in $\BGP_{z,t}$ is itself in $\ccp_z$, which means that the set of $\BGP_{z,t}$ $t$-cores is the one of $\ccp_z$ $t$-cores .
\end{Remark}

Hence the analogue of Proposition~\ref{prop:hanlitt} when applied to partitions in $\BGP_{z,t}$ is as follows.

\begin{Proposition}\label{prop:bgztLittlewood}
Let $t\geq2$ and $0\leq z\leq t-1$ be integers. The Littlewood decomposition $\Phi_t$ maps a partition $\lambda\in\BGP_{z,t}$ to $\left(\omega,\nu^{(0)},\dots,\nu^{(t-1)}\right)=(\omega,\underline{\nu})$ such that:
\begin{align*}
&(BG1)\quad \text{the first component } \omega \text{ is a $\ccp_z$ $t$-core and }\nu^{(0)},\dots,\nu^{(t-1)} \text{are partitions},\\
&(BG2) \quad \forall r \in \left\lbrace z,\dots,t-1\right\rbrace, \nu^{(r)}=\left(\nu^{(t+z-r-1)}\right)',\\
&(BG'2)\quad \nu^{(0)}=\dots=\nu^{(z-1)}=\emptyset\quad \text{and if $t+z-1$ is even, } \nu^{\left((t+z-1)/2\right)}=\emptyset, \\
&(BG3) \quad |\lambda|=|\omega|+2t\sum_{r=z}^{\lfloor (t+z-2)/2\rfloor} \lvert\nu^{(r)}\rvert,\\
&(BG4)\quad \mathcal{H}_t(\lambda)=t\mathcal{H}(\underline{\nu}).
\end{align*}
\end{Proposition}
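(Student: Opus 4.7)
The plan is to assemble the statement as a direct consequence of the four ingredients already proved or recalled in Section~\ref{sec:lit}: Proposition~\ref{prop:hanlitt} on the general Littlewood decomposition, Theorem~\ref{thm:albion} describing $\Phi_t(\ccp_z)$, Proposition~\ref{prop:bgzt} characterizing $\BGP_{z,t}$ inside $\ccp_z$ by vanishing of certain components of the quotient, and Remark~\ref{rk:corebgzt} on cores of partitions in $\ccp_z$. Since $\BGP_{z,t}\subset\ccp_z\subset\ccp$, each of (BG1)--(BG4) will be obtained by restriction.

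First I would establish (BG1), (BG2) and (BG'2). Property (BG1) combines part (P1) of Proposition~\ref{prop:hanlitt} (which guarantees that $\omega$ is a $t$-core and the $\nu^{(i)}$ are partitions) with Remark~\ref{rk:corebgzt} (which says $\omega\in\ccp_z$ whenever $\lambda\in\ccp_z$, and in particular whenever $\lambda\in\BGP_{z,t}$). Property (BG2) is exactly the identity~\eqref{eq:albion2} of Theorem~\ref{thm:albion}, valid for any $\lambda\in\ccp_z$. Property (BG'2) is precisely the content of Proposition~\ref{prop:bgzt}, which asserts that $\lambda\in\BGP_{z,t}$ is equivalent to $\nu^{(0)}=\cdots=\nu^{(z-1)}=\emptyset$ together with $\nu^{((t+z-1)/2)}=\emptyset$ when $t+z-1$ is even.

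Next I would deduce (BG3). Starting from (P2) of Proposition~\ref{prop:hanlitt}, namely $|\lambda|=|\omega|+t\sum_{i=0}^{t-1}|\nu^{(i)}|$, the terms with $0\leq i\leq z-1$ vanish by (BG'2). The remaining indices $\{z,z+1,\dots,t-1\}$ are matched by the involution $r\mapsto t+z-r-1$ coming from (BG2), and since conjugate partitions have equal size one gets $|\nu^{(r)}|=|\nu^{(t+z-r-1)}|$. The strict pairs $r<t+z-r-1$ correspond exactly to $z\leq r\leq\lfloor(t+z-2)/2\rfloor$, and when $t+z-1$ is even the unique fixed point $r=(t+z-1)/2$ contributes zero thanks to the second part of (BG'2). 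Combining these observations gives
\begin{equation*}
\sum_{i=0}^{t-1}|\nu^{(i)}|=2\sum_{r=z}^{\lfloor(t+z-2)/2\rfloor}|\nu^{(r)}|,
\end{equation*}
which yields (BG3). Finally, (BG4) is nothing but property (P3) of Proposition~\ref{prop:hanlitt} transcribed for $\lambda\in\BGP_{z,t}\subset\ccp$.

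There is no real obstacle, as the whole proposition is a repackaging: each property amounts to invoking the appropriate result of the preceding subsection. The only mildly delicate point is the index bookkeeping in (BG3), where one must correctly identify the summation range $\lfloor(t+z-2)/2\rfloor$ by a parity discussion on $t+z$, and check that the fixed point of the involution $r\mapsto t+z-r-1$, when it exists, is precisely the component forced to be empty by (BG'2).
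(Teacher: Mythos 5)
Your proposal is correct and matches the paper's (implicit) reasoning: the paper states Proposition~\ref{prop:bgztLittlewood} as an immediate consequence of Proposition~\ref{prop:hanlitt}, Theorem~\ref{thm:albion}, Proposition~\ref{prop:bgzt} and Remark~\ref{rk:corebgzt}, which is exactly how you assemble (BG1)--(BG4). Your index bookkeeping for (BG3) — pairing $r\mapsto t+z-r-1$, identifying the fixed point $(t+z-1)/2$ with the component forced empty by (BG'2), and checking the range $\lfloor(t+z-2)/2\rfloor$ — is the same computation the paper carries out when it uses the proposition in Section~\ref{sec:proofthm}.
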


Therefore $\lambda\in\BGP_{z,t}$ is uniquely defined if its $t$-core is known as well as the $\lfloor (t-z)/2\rfloor$ elements starting from $\nu^{(z)}$ of its quotient, which are partitions without any constraints. 

\section{Proof of the addition-multiplication theorem for $z$-asymmetric partitions}\label{sec:proofthm}

In this section, we prove Theorem~\ref{thm:addition-multz} stated in the introduction. First we will compute the term
\begin{equation}
\displaystyle\sum_{\substack{\lambda\in \BGP_{z,t}\\core_t(\lambda)=\omega}}q^{|\lambda|}x^{|\mathcal{H}_{t}(\lambda)|}\prod_{h\in\mathcal{H}_{t}(\lambda)}\rho_1(h)\sum_{h\in\mathcal{H}_{t}(\lambda)}\rho_2(h),\label{term}
\end{equation}
where $\omega$ is a fixed $\BGP_{z,t}$ $t$-core, equivalently (by Remark~\ref{rk:corebgzt}) a $\ccp_z$ $t$-core. Using properties $(P2)$ and $(P3)$ from Proposition~\ref{prop:hanlitt}, this is equal to
\begin{equation}
q^{|\omega|}\sum_{\underline{\nu}\in \ccp^t} q^{t\displaystyle\lvert\underline{\nu}\rvert}x^{\displaystyle\lvert\underline{\nu}\rvert}\prod_{h\in\mathcal{H}(\underline{\nu})}\rho_1(th)\sum_{h\in\mathcal{H}(\underline{\nu})}\rho_2(th),\label{sumprod}
\end{equation}
where $\lvert\underline{\nu}\rvert:=\sum_{i=0}^{t-1}\lvert\nu^{(i)}\rvert$. Now by properties $(BG2)$ and $(BG'2)$ from Proposition~\ref{prop:bgztLittlewood}, for $0\leq r\leq z-1$ we have $\nu^{(r)}=\emptyset=\nu^{((t+z-1)/2)}$ and for $z\leq r\leq \lfloor (t+z-2)/2\rfloor$ we have $|\nu^{(r)}|=|\nu^{(t+z-r-1)}|$ and $\mathcal{H}(\nu^{(r)})=\mathcal{H}(\nu^{(t+z-r-1)})$ because sizes and hook lengths multisets of partitions are invariant by conjugation.

Therefore the product in~\eqref{sumprod} can be rewritten as follows
$$
q^{t\lvert\underline{\nu}\rvert}x^{\lvert\underline{\nu}\rvert}\prod_{h\in\mathcal{H}(\underline{\nu})}\rho_1(th)= \prod_{r=z}^{\lfloor (t+z-2)/2\rfloor}q^{2t\lvert\nu^{(r)}\rvert}x^{2\lvert\nu^{(r)}\rvert}\prod_{h\in\mathcal{H}(\nu^{(r)})}\rho_1^2(th).
$$

Moreover by application of Proposition~\ref{prop:bgztLittlewood}~$(BG2)$ and~$(BG'2)$, the sum part $\sum_{h\in\mathcal{H}(\underline{\nu})}\rho_2(th)$ in~\eqref{sumprod} is 
$$
\sum_{r=z}^{\lfloor (t+z-2)/2\rfloor}\left(\sum_{h\in\mathcal{H}(\nu^{(r)})}\rho_2(th)+\sum_{h\in\mathcal{H}(\nu^{(t-r-1)})}\rho_2(th)\right)=2\sum_{r=z}^{\lfloor (t+z-2)/2\rfloor}\sum_{h\in\mathcal{H}(\nu^{(r)})}\rho_2(th).
$$

Therefore \eqref{sumprod}, and thus \eqref{term}, becomes
\begin{multline*}
2q^{\vert \omega \vert}\sum_{r=z}^{\lfloor (t+z-2)/2\rfloor}\left(\sum_{\nu^{(r)}\in\ccp}\displaystyle q^{2t\lvert\nu^{(r)}\rvert}x^{2\lvert\nu^{(r)}\rvert}\prod_{h\in\mathcal{H}(\nu^{(r)})}\rho_1^2(th)\sum_{h\in\mathcal{H}(\nu^{(r)})}\rho_2(th)\right)\\
\times\left(\displaystyle\sum_{\nu\in\ccp} q^{2t\lvert\nu\rvert}x^{2\lvert\nu\rvert}\prod_{h\in\mathcal{H}(\nu)}\rho_1^2(th)\right)^{\lfloor (t-z)/2\rfloor-1}.
\end{multline*}

Hence we get
$$
\sum_{\substack{\lambda\in \BGP_{z,t}\\core_t(\lambda)=\omega}}q^{|\lambda|}x^{|\mathcal{H}_{t}(\lambda)|}\prod_{h\in\mathcal{H}_{t}(\lambda)}\rho_1(h)\sum_{h\in\mathcal{H}_{t}(\lambda)}\rho_2(h)=2\left(\lfloor (t-z)/2\rfloor\right)q^{|\omega|}\left(f_t\left(x^2q^{2t}\right)\right)^{\lfloor (t-z)/2\rfloor-1}g_t(x^2q^{2t}).
$$

To finish the proof, it remains to sum both sides over all $\ccp_z$ $t$-core partitions $\omega$ and compute the generating function $\sum_\omega q^{|\omega|}$, where the sum is over all $\ccp_z$ $t$-cores. This was done in~\cite[Corollary~6.4]{AK} (we just consider the empty product as $1$ to include the case $z=t-1$ which was omitted in the formula from~\cite{AK}, as it is immediate by the first equivalence in~\eqref{caractmotsPzet-z} and the definition of $\mathcal{C}_{t-1,t}$ that the only $\ccp_{t-1}$ $t$-core is the empty partition):
\begin{equation}\label{AKPzcores}
\sum_{\substack{\omega\in \ccp_z\\\omega\,t-core}} q^{|\omega|}=\prod_{i=0}^{\lfloor (t-z)/2\rfloor-1}(-q^{2i+z+1},-q^{2t-2i-z-1},q^{2t};q^{2t})_\infty,
\end{equation}
and this gives the desired result.

\section{Some consequences}\label{sec:proofcoro}

Here we prove Corollaries~\ref{coro:congz}--\ref{coro:NOBGzt} from the introduction.

\begin{proof}[Proof of Corollary~\ref{coro:congz}]

First taking $\rho_2(h)=1$ in Theorem~\ref{thm:addition-multz} yields $g_t(q)=\sum_{\lambda\in\ccp}|\lambda|q^{|\lambda|}\prod_{h\in\mathcal{H}(\lambda)}\rho_1(th)^2$, therefore 
\begin{equation}\label{eq:rho2=1}
g_t(x^2q^{2t})=\frac{x}{2}\frac{d}{dx}f_t(x^2q^{2t}).
\end{equation}
Considering moreover $\rho_1(h)=y$ if $h=t$ and $1$ otherwise, then by~\eqref{eq:frhoy}, we get
$$f_t(q)=\sum_{\lambda\in\mathcal{P}}q^{|\lambda|}y^{2n_1(\lambda)}=\frac{((1-y^2)q;q)_\infty}{(q;q)_\infty}.$$
Thus using~\eqref{eq:rho2=1}, Formula~\eqref{eq:addz} becomes
\begin{multline*}
\sum_{\lambda\in\BGP_{z,t}}q^{|\lambda|}x^{|\mathcal{H}_t(\lambda)|}|\mathcal{H}_t(\lambda)|\,y^{n_t(\lambda)}=x\lfloor(t-z)/2\rfloor(f_t(x^2q^{2t}))^{\lfloor\frac{t-z}{2}\rfloor-1}\frac{d}{dx}f_t(x^2q^{2t})\\
\times(q^{2t};q^{2t})^{\lfloor\frac{t-z}{2}\rfloor}_{\infty}\prod_{i=0}^{\lfloor (t-z)/2\rfloor-1}(-q^{2i+z+1},-q^{2t-2i-z-1};q^{2t})_\infty.
\end{multline*}
The desired generating series follows by dividing both sides by $x$, integrating with respect to $x$, and setting $x=1$.

For the second part of the corollary, we choose this time $\rho_1$ the constant function equal to $1$ and $\rho_2(h)=1$ if $h=t$, $0$ otherwise. Therefore we get $f_{t}(q)=1/(q;q)_\infty$ and by~\eqref{ft1},
$$
g_t(q)=\frac{q}{(1-q)(q;q)_\infty}.
$$
Thus, setting $x=1$, Formula~\eqref{eq:addz} becomes this time
$$
\sum_{\lambda\in\BGP_{z,t}}q^{|\lambda|}n_t(\lambda)=2\lfloor(t-z)/2\rfloor\frac{q^{2t}}{1-q^{2t}}\prod_{i=0}^{\lfloor (t-z)/2\rfloor-1}(-q^{2i+z+1},-q^{2t-2i-z-1};q^{2t})_\infty.
$$
As $a_{z,t}(n)=\sum_{\lambda\in\BGP_{z,t}(n)}n_t(\lambda)$ and the last product is the generating function of $\BGP_{z,t}$, we get our results by extracting the coefficient of $q^n$ on both sides of the above formula.
\end{proof}

\begin{proof}[Proof of Corollary~\ref{coro:congDD}]

The Littlewood decomposition, when restricted to $\mathcal{DD}=\ccp_{1}$, has well-known properties (see for instance~\cite{GKS}), which can be recovered from Theorem~\ref{thm:albion}. For short, one gets:
$$\begin{array}{rcll}
\lambda\in\mathcal{DD} &\mapsto &\left(\omega,\underline{\tilde{\nu}},\eta\right)\in\mathcal{DD}_{(t)}\times\ccp^{(t-1)/2}\times\mathcal{DD}\quad&\text{if $t$ odd,}\\
\lambda\in\mathcal{DD} &\mapsto &\left(\omega,\underline{\tilde{\nu}},\eta,\mu\right)\in\mathcal{DD}_{(t)}\times\ccp^{t/2-1}\times\mathcal{DD}\times\ccsc\quad&\text{if $t$ even.}\end{array}$$

By Proposition~\ref{prop:bgzt} the set $\BGP_{1,t}$ is simply the set of partitions in $\mathcal{DD}$ for which the above $\eta$ and $\mu$ are empty. 
By Definition~\ref{def:BGzt} with $z=1$, a partition $\lambda$ with Frobenius notation~\eqref{Frob} belongs to $\BGP_{1,t}$ if and only if for all $j\in\{1,\dots,d\}$, $(a_j+1)/t\notin\mathbb{N}$ and $(2a_j+2)/t\notin2\mathbb{N}+1$. As the principal hooks are $h_{(i,i)}(\lambda)=2a_i+2$, this second condition is equivalent to  $h_{(i,i)}(\lambda)/t\notin 2\mathbb{N}+1$ for all $i\in\lbrace 1,\dots,d\rbrace$. Now for $j_0\in\{1,\dots,d\}$, we have 
$$
\frac{a_{j_0}+1}{t}\in \mathbb{N}\Leftrightarrow \exists\alpha_0\in\mathbb{N},\,2a_{j_0}+2=2\alpha_0t\Leftrightarrow \frac{h_{(j_0,j_0)}(\lambda)}{t}\in 2\mathbb{N}.
$$
Therefore $\lambda\in\BGP_{1,t}$ if and only if  $h_{(i,i)}(\lambda)/t$ belongs neither to $ 2\mathbb{N}+1$ nor to $2\mathbb{N}$ for all $i\in\lbrace 1,\dots,d\rbrace$, which proves~\eqref{BG1t}.

The generating series and the congruences are then immediate consequences of Corollary~\ref{coro:congz}, together with classical $q$-series manipulations: in the $t$ odd case, one can for instance write
\begin{align*}
\prod_{i=0}^{(t-1)/2-1}(-q^{2i+z+1},-q^{2t-2i-z-1},q^{2t};q^{2t})_\infty&=(-q^2,-q^4,\dots,-q^{t-1},-q^{t+1},\dots,-q^{2t-2};q^{2t})_\infty\\
&=\frac{(-q^2;q^2)_\infty}{(-q^{2t};q^{2t})_\infty}.
\end{align*}
A similar computation yields the even case.
\end{proof}

\begin{proof}[Proof of Corollary~\ref{coro:NOBGzt}]
Choose $\rho_1(h)=(1-u/h^2)^{1/2}$ in Theorem~\ref{thm:addition-multz}, then by~\eqref{eq:no} we get 
$$f_t(q)=(q;q)_\infty^{u/t^2-1}.$$
Next taking $\rho_2(h)=1$ and using~\eqref{eq:rho2=1}, Formula~\eqref{eq:addz} becomes
\begin{multline*}
\sum_{\lambda\in\BGP_{z,t}}q^{|\lambda|}x^{|\mathcal{H}_t(\lambda)|}|\mathcal{H}_t(\lambda)|\prod_{h\in\mathcal{H}_{t}(\lambda)}\left(1-\frac{u}{h^2}\right)^{1/2}=x\lfloor(t-z)/2\rfloor(f_t(x^2q^{2t}))^{\lfloor\frac{t-z}{2}\rfloor-1}\frac{d}{dx}f_t(x^2q^{2t})\\
\times\prod_{i=0}^{\lfloor (t-z)/2\rfloor-1}(-q^{2i+z+1},-q^{2t-2i-z-1},q^{2t};q^{2t})_\infty.
\end{multline*}
The desired result follows by dividing both sides by $x$,  integrating with respect to $x$, and setting $x=1$.
\end{proof}

\section*{Acknowledgements}
The first author is supported by the Agence Nationale de la Recherche funding COMBIN\'E  ANR-19-CE48-0011. 
The second author is supported by the Agence Nationale de la Recherche funding CORTIPOM ANR-21-CE40-001.



\begin{thebibliography}{99}

\bibitem{A} S. P. Albion, {\em Character factorisations, $z$-asymmetric partitions and plethysm}, Sémin. Lothar. Combinat. {\bf 91B} (2024), Proceedings of the 36th Conference on Formal Power Series and Algebraic Combinatorics (Bochum), Article 22, 12 pp. 

\bibitem{A1} S. P. Albion, {\em Character factorisations, $z$-asymmetric partitions and plethysm}, preprint arXiv:2501.18520 (2025), 37 pp.

\bibitem{AAOS} T. Amdeberhan, G. E. Andrews, K. Ono, and A. Singh, {\em 
 Hook lengths in self-conjugate partitions}, Proc. Amer. Math. Soc. Ser. B {\bf 11} (2024), 345--357.
 
\bibitem{AK} A. Ayyer and N. Kumari, {\em Factorization of classical characters twisted by roots of unity}, J. Algebra {\bf 609} (2022), 437--483.

\bibitem{BM} R. Bacher and L. Manivel, {\em Hooks and powers of parts in partitions}, S\'em. Lothar. Combin. {\bf 47} (2001), article B47d, 11 pp.

\bibitem{BBCFW} C. Ballantine, H. E. Burson, W. Craig, A. Folsom, and B. Wen, {\em Hook length biases and general linear partition inequalities}, Res. Math. Sci. {\bf 10} (2023), Paper No. 41, 36 pp.

\bibitem{B} A. Bernal, {\em On self-Mullineux and self-conjugate partitions}, Electron. J. Combin. {\bf 28} (2021), Paper No. 1.57, 33 pp.

\bibitem{Be} C. Bessenrodt, {\em On hooks of Young diagrams}, Ann. Comb. {\bf 2} (1998), 103--110.

\bibitem{BG} O. Brunat and J.-B. Gramain {\em A basic set for the alternating group}, J. Reine Angew. Math. {\bf 641} (2010), 177--202.

\bibitem{BN}  O. Brunat and R. Nath {\em Cores and quotients of partitions through the Frobenius symbol}, preprint arXiv:1911.12098 (2019), 54 pp.

\bibitem{GKS} F. Garvan, D. Kim, and D. Stanton, {\em Cranks and {$t$}-cores}, Invent. Math. {\bf 101} (1990), 1--17.

\bibitem{GR} G. Gasper and M. Rahman,
{\em Basic Hypergeometric Series}, Second Edition, Encyclopedia of Mathematics and its Applications 96, Cambridge University Press, Cambridge, 2004.

\bibitem{H} G. Han, {\em  The Nekrasov--Okounkov hook length formula: refinement, elementary proof, extension and applications}, Ann. Inst. Fourier {\bf 60} (2010), 1--29.

\bibitem{HJ} G. Han and K. Ji, {\em Combining hook length formulas and {BG}-ranks for partitions via the {L}ittlewood decomposition}, Trans. Amer. Math. Soc. {\bf 363} (2011), 1041--1060.

\bibitem{JK} G. James and A. Kerber, {\em The representation theory of the symmetric group}, Encyclopedia of Mathematics and its Applications 16, Addison-Wesley Publishing Co., Reading, Mass., 1981.

\bibitem{LW} C. Lecouvey and D. Wahiche, {\em Macdonald identities, Weyl--Kac denominator formulas and affine Grassmannians}, preprint arXiv:2404.10532v2 (2024), 34 pp.

\bibitem{L} D. E. Littlewood, {\em The Theory of Group Characters and Matrix Representations of Groups}, Oxford University Press, New York, 1940.

\bibitem{NO} N. Nekrasov and A. Okounkov, {\em Seiberg--Witten theory and random partitions}, 
The unity of mathematics Vol. 244. Progr. Math. Birkh\"auser Boston, Boston,
MA, 2006, pp. 525--596.

\bibitem{Osima} M. Osima, {\em On the irreducible representations of the symmetric group}, Canad. J. Math.  {\bf 4} (1954), 381--384.

\bibitem{W} D. Wahiche, {\em Multiplication theorems for self-conjugate partitions}, Combinatorial Theory {\bf 2}, Issue 2 (2022), Paper No. 13, 31 pp.

\bibitem{W1} D. Wahiche, {\em Some combinatorial interpretations of the Macdonald identities for affine root systems and Nekrasov--Okounkov type formulas}, preprint arXiv:2306.08071 (2023), 54 pp.

\bibitem{We} B. W. Westbury, {\em Universal characters from the Macdonald identities}, Adv. Math. {\bf 202} (2006), 50--63.

\end{thebibliography}
\end{document}